\documentclass[11pt, letterpaper]{amsart}
\usepackage[utf8]{inputenc}
\usepackage{enumerate, xcolor, graphicx}
\definecolor{Magenta}{cmyk}{0,1,0,0}
\definecolor{dgreen}{rgb}{0,0.6,0.}

\usepackage{academicons}

\usepackage{amssymb,amsthm,amsmath,amsfonts}
\usepackage{diagbox}

\usepackage{mathrsfs, mathtools, mathdots}
\usepackage{paralist,fancyhdr,etoolbox,nicefrac,mathpazo}
\usepackage{listings}%for including sagecode
\usepackage{tikz-cd}%diagrampictures
\usepackage{parskip,xfrac}
\usepackage{datetime}
\usepackage{hyperref}
\usepackage{cleveref}
\usepackage{etoolbox}
\usepackage[top=27mm,bottom=27mm,left=25mm,right=25mm]{geometry}
\hypersetup{
    colorlinks=true,
    citecolor=red,       % The "angry grader" red pen for citations
    linkcolor=darkblue,  % Internal cross-references (like \cref) in blue
    urlcolor=red         % URLs also in red
}
\usepackage[T1]{fontenc}
\usepackage[default,scale=1.5]{comicneue} % 'default' sets it as the main document font
\usepackage[italic]{mathastext} % Forces math to use the Comic Neue font
\DeclareFontSeriesDefault[rm]{md}{bf}
\DeclareFontSeriesDefault[sf]{md}{bf}

\AtBeginDocument{\bfseries\boldmath}

\let\oldnormalfont\normalfont
\renewcommand{\normalfont}{\oldnormalfont\bfseries}
\boldmath
\usepackage[normalem]{ulem}     % The underline package
\newcommand{\ultrabold}{\fontseries{b}\selectfont}
\definecolor{darkblue}{rgb}{0.0, 0.0, 0.55} 

\makeatletter
\def\section{\@startsection{section}{1}%
  \z@{.7\linespacing\@plus\linespacing}{.5\linespacing}%
  {\normalfont\Large\bfseries\itshape\ultrabold\color{darkblue}}}
  
\def\subsection{\@startsection{subsection}{2}%
  \z@{.5\linespacing\@plus.7\linespacing}{-.5em}%
  {\normalfont\large\bfseries\itshape\ultrabold\color{darkblue}}}
\makeatother

\makeatletter
\let\oldsection\section
\renewcommand{\section}{\@ifstar{\@starredsection}{\@unstarredsection}}
\newcommand{\@starredsection}[1]{\oldsection*{\uline{#1}}}
\newcommand{\@unstarredsection}[1]{\oldsection{\uline{#1}}}

\let\oldsubsection\subsection
\renewcommand{\subsection}{\@ifstar{\@starredsubsection}{\@unstarredsubsection}}
\newcommand{\@starredsubsection}[1]{\oldsubsection*{\uline{#1}}}
\newcommand{\@unstarredsubsection}[1]{\oldsubsection{\uline{#1}}}
\makeatother

\makeatletter
\newtheoremstyle{underlinedthm}% name
  {6pt}% Space above
  {6pt}% Space below
  {\itshape}% Body font (Italic for theorems/lemmas)
  {}% Indent amount
  {\bfseries\itshape\ultrabold\color{darkblue}}% Theorem head font (Bold, Italic, Blue)
  {.}% Punctuation after theorem head
  {.5em}% Space after theorem head
  {\uline{\thmname{#1}\thmnumber{ #2}\thmnote{ (#3)}}}% CUSTOM UNDERLINED HEAD SPEC

\newtheoremstyle{underlineddef}% name
  {6pt}% Space above
  {6pt}% Space below
  {\normalfont}% Body font (Normal for definitions/remarks)
  {}% Indent amount
  {\bfseries\itshape\ultrabold\color{darkblue}}% Theorem head font (Bold, Italic, Blue)
  {.}% Punctuation after theorem head
  {.5em}% Space after theorem head
  {\uline{\thmname{#1}\thmnumber{ #2}\thmnote{ (#3)}}}% CUSTOM UNDERLINED HEAD SPEC
\makeatother

\theoremstyle{underlinedthm} % Applies italic body + underlined blue head
\makeatletter
\def\@settitle{\begin{center}%
  \baselineskip14\p@\relax
  \huge\bfseries\itshape\ultrabold\color{darkblue}%
  \expandafter\uline\expandafter{\@title}% <-- The magic fix
  \end{center}%
}
\makeatother

\newtheorem{theorem}{Theorem}[]
\newtheorem{definition}{Definition}[]
\newtheorem{discussion}{Discussion}[]

\newtheorem{lemma}{Lemma}
\newtheorem{proposition}{Proposition}
\newtheorem{notation}[definition]{Notation}
\newtheorem{corollary}{Corollary}

\theoremstyle{remark}

\newcommand{\ncp}[1]{\langle#1\rangle}

\newcommand{\Q}{\mathbb{Q}}

\newcommand{\Z}{\mathbb{Z}}

\newcommand{\Zbar}{\overline{\mathbb{Z}}}

\newcommand{\K}{\mathbb{K}}

\renewcommand{\P}{\mathbb{P}}
\newcommand{\A}{\mathbb{A}}

\newcommand{\p}{\mathfrak{p}}
\newcommand{\q}{\mathfrak{q}}
\renewcommand{\O}{\mathcal{O}}
\newcommand{\OK}{\mathcal{O}_{\mathbb{K}}}
\newcommand{\nm}{\mathfrak{Nm}}
\newcommand{\F}{\mathbb{F}}

\begin{document}

\title[Fundamental Theorem of Orders]{Unique decomposition of orders}
\author[G.D.Patil]{Gaurav Digambar Patil}
\address{LMSI address}
\email{gaurav231992@gmail.com}
\href{https://orcid.org/0009-0008-7073-0160}{orcid.org/0009-0008-7073-0160}
\date{\today}

\maketitle

\begin{abstract}
    We establish a Fundamental Theorem of Orders (FTO), which allows us to express any order (in a number field) uniquely as an intersection of \textit{irreducible orders}. Along this decomposition, the index (in the ring of integers) distributes multiplicatively, and the conductor factors into pairwise co-prime ideals.

    We use it to show a more general version of Furtwangler criterion about the structure of conductors of orders over $\Z$, as this answers a wide variations of such questions. In a future work, we will also give applications to weighted enumeration of number fields.
\end{abstract}

\section{Introduction}
The study of orders in algebraic number fields is a classical subject with deep connections to arithmetic geometry, algorithmic number theory, and arithmetic statistics. Given a number field $\K$ with its maximal order (the ring of integers) $\OK$, a general order $\O$ is a full-rank subring of $\OK$. While $\OK$ is a Dedekind domain, possessing unique prime ideal factorization and well-behaved localizations, general orders often exhibit complex local singularities (primes at which the localization is not a DVR). These deviations from being integrally closed are typically encoded by two highly interdependent invariants: the index $[\OK:\O]$ and the conductor ideal $c_\O$. 

Historically, controlling the local structure of non-maximal orders has been a significant hurdle in the enumeration of number fields and the evaluation of local factors of $L$-functions. In this paper, we establish a structural decomposition for arbitrary orders, which we term the \textit{Fundamental Theorem of Orders} (FTO). The FTO demonstrates that any order $\O$ can be expressed uniquely as a finite intersection of \textit{irreducible orders}—defined here as integral domains whose localizations fail to be Discrete Valuation Rings (DVRs) at exactly one prime ideal.

The arithmetic power of this decomposition lies in its separation properties. We prove that along this unique intersection, the index $[\OK : \O]$ distributes multiplicatively, and the conductor $c_\O$ factors into pairwise coprime ideals. This allows us to cleanly isolate non-unramified non-ramified (complex singularities) local behaviour of orders. 

While the primary motivation for developing this framework is its application to the weighted enumeration of number fields (which will be explored in a subsequent paper), the local control established by the FTO has immediate, standalone applications. To demonstrate this, we utilize the FTO to generalize Furtw\"{a}ngler's criterion, providing a complete structural classification of conductor ideals for arbitrary $R$-algebras.

\textbf{Outline of the paper.} In Section 2, we review and reframe standard results concerning Dedekind domains and Ostrowski's theorem to establish our local-to-global dictionary. In Section 3, we formally define irreducible orders, prove the associated Separation Lemmata, and establish the Fundamental Theorem of Orders. In Section 4, we relate the relative norms, ramification indices, and inertial degrees across these localizations, concluding with the application of our framework to generalize Furtw\"{a}ngler's criterion for conductors.

\section{Dedekind Domains in number-fields}
In this section, we show two minor variations on standard results:
\begin{itemize}
    \item First, we show is that the integral closure of any sub-ring of a number-field is a Dedekind domain. This is a minor adaptation of the standard algebraic number theory result stating that the integral closure of a Dedekind domain in a finite degree extension of its field of fractions is itself a Dedekind domain. While we expect this result exists in the standard literature, we provide a direct proof here for completeness.
    \item Second, we show a well-known but rarely explicitly written result in algebraic number theory/commutative algebra: the natural association mapping Dedekind domains (whose fractional field is a fixed number-field) to subsets of non-archimedean valuations on that number-field is bijective. This serves as a rewrite of Ostrowski's theorem. The outline is as follows: Ostrowski's theorem gives us a natural way to understand all possible non-archimedean valuations on a number-field via non-zero prime ideals of its ring of integers. Since every non-zero prime ideal in a Dedekind domain will induce a valuation, prime ideals in a Dedekind domain will carve out an explicit subset of all non-archimedean valuations. Now, standard commutative algebra dictates that any integral domain is completely determined by its localizations at maximal ideals. This tells us that such a map is in fact injective. On the other hand, standard theorems on the localization of rings applied to this setting tell us the association is surjective.
\end{itemize}

\begin{notation}
    Given a number-field, $\K,$ let $\OK$ be its ring of integers.
Let $M(\K)$ denote the set of non-Archimedean valuations of $\K$.
By Ostrowski's theorem, this set can naturally be identified with the set of non-zero prime ideals of $\OK$.
For any integral domain $R,$ we denote the integral closure of $R$ in its field of fractions by $\overline{R}$.
We denote the set of all prime numbers in $\Z$ by $\P_\Z.$
\end{notation}

\begin{lemma}\label{lem1-noeth} If $R$ is an integral domain such that $Frac(R)$ is a number-field $\K$, then
    \[
     \textit{ for any } p\in \P_\Z, \textit{ we have } |\sfrac{R}{pR}|\le p^{[\K:\Q]}.
\]
\end{lemma}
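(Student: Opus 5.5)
We must show that $R/pR$ is an $\F_p$-vector space of dimension at most $n=[\K:\Q]$. Since $p\cdot(R/pR)=0$, the quotient $R/pR$ is an $\F_p$-vector space, and its cardinality is $p^{d}$ with $d=\dim_{\F_p} R/pR$ (possibly infinite a priori). The plan is therefore to prove that any $n+1$ elements of $R/pR$ are $\F_p$-linearly dependent. This gives $d\le n$ and hence $|R/pR|\le p^{n}$.

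Take $r_1,\dots,r_{n+1}\in R$. Since $\K$ has dimension $n$ over $\Q$, these elements are $\Q$-linearly dependent. Clearing denominators gives integers $a_1,\dots,a_{n+1}$, not all zero, with $\sum a_i r_i=0$. Dividing by their gcd (legitimate since $\K$ is a field, so we can divide in $\K$ and the relation still holds with integer coefficients), we may assume $\gcd(a_1,\dots,a_{n+1})=1$. Then not all $a_i$ are divisible by $p$. Reducing the relation modulo $pR$ yields $\sum \bar a_i \bar r_i=0$ in $R/pR$ with $\bar a_i\in\F_p$ not all zero. This is the required nontrivial $\F_p$-linear dependence.

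The only delicate point is the gcd normalization, which ensures the reduced relation is nontrivial mod $p$. It uses only that the relation lives in the torsion-free $\Z$-module $\K$, so it poses no real obstacle. Notably, no finiteness or Noetherian hypothesis on $R$ is needed; $R$ is not assumed to lie in $\OK$ or be finitely generated. One also observes that $\Z\subseteq R$, since $R$ is a ring with $1$ of characteristic zero, so $pR$ makes sense and the reduction map is well defined.
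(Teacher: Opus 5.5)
Your proof is correct and follows essentially the same route as the paper: a $\Q$-linear relation among $n+1$ lifts is normalized to integer coefficients with $\gcd 1$ and reduced modulo $p$, giving a nontrivial $\F_p$-dependence and hence $\dim_{\F_p} R/pR\le[\K:\Q]$. Your write-up states the key reduction step correctly, whereas the paper's text says ``linearly independent'' where ``linearly dependent'' is meant.
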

\begin{proof}
   Note that $\sfrac{R}{pR}$ is a $\F_p$-vector space ($\sfrac{\Z}{p\Z}$-module).We denote the canonical map from $R$ to $\sfrac{R}{pR}$ by $x\mapsto x^*.$
   
   If $x_i$'s were to satisfy a $\Z$-linear relationship, we may rewrite the relationship with coefficients whose greatest common factor is $1$.
Passing to $\sfrac{R}{pR}$ through $^*$, we get that $x_1^*,x_2^*,\cdots x_k^*$ are linearly independent.
Thus, if $x_1^*,x_2^*,\cdots x_k^*$ are linearly independent over $\F_p,$ then $x_1,x_2,\cdots,x_k$ are linearly independent over $\Z$ (or equivalently, over $\Q$).
It follows that 
   \[
   \dim_{\F_p}(\sfrac{R}{pR})\le [\K:\Q], \textit{ that is } |\sfrac{R}{pR}|\le p^{[\K:\Q]}.\qedhere
   \]
\end{proof}
\begin{lemma}\label{lem2-noeth}If $R$ is an integral domain such that $|\sfrac{R}{pR}|$ is finite, then
    \[
    \forall p\in \P_\Z: |\sfrac{R}{p^kR}|=|\sfrac{R}{pR}|^k.
\]
\end{lemma}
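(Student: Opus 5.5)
The plan is induction on $k$, using the filtration
\[
R\supseteq pR\supseteq p^2R\supseteq\cdots\supseteq p^kR .
\]
Since these are additive groups, we have
\[
|\sfrac{R}{p^kR}|=|\sfrac{R}{p^{k-1}R}|\cdot|\sfrac{p^{k-1}R}{p^kR}| .
\]
This holds provided both factors are finite, which the induction will supply. The case $k=1$ is trivial, and $k=0$ gives $1$ on both sides. So it suffices to show that $|\sfrac{p^{k-1}R}{p^kR}|=|\sfrac{R}{pR}|$ for every $k\ge 1$.

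For this, consider the map $\varphi: R\to \sfrac{p^{k-1}R}{p^kR}$ given by $x\mapsto p^{k-1}x+p^kR$. It is an additive homomorphism, and it is surjective by the definition of $p^{k-1}R$. Clearly $pR\subseteq\ker\varphi$. For the reverse inclusion, suppose $p^{k-1}x=p^ky$ for some $y\in R$. Then $p^{k-1}(x-py)=0$ in $R$. Since $R$ is an integral domain of characteristic $0$, the element $p^{k-1}$ is a non-zero-divisor, so $x=py\in pR$.

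The characteristic-$0$ point is the main obstacle. In characteristic $p$ the statement fails, since there $p^{k-1}=0$. In the paper's setting $R\subseteq\K$, so $\Z\subseteq R$ and the issue does not arise. For a bare integral domain with $R/pR$ finite, the statement as written would need to exclude characteristic $p$. I would note this hypothesis explicitly, as implicit from the context where $Frac(R)$ is a number field.

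Granting this, $\varphi$ induces an isomorphism $\sfrac{R}{pR}\cong \sfrac{p^{k-1}R}{p^kR}$. The inductive hypothesis $|\sfrac{R}{p^{k-1}R}|=|\sfrac{R}{pR}|^{k-1}$ then gives
\[
|\sfrac{R}{p^kR}|=|\sfrac{R}{pR}|^{k-1}\cdot|\sfrac{R}{pR}|=|\sfrac{R}{pR}|^k .
\]
Finiteness at each stage follows along the way, which justifies the multiplicativity of the indices used above.
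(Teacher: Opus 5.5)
Your proof is correct and follows the paper's argument: the paper also uses that multiplication by $p^m$ induces a bijection $R/pR\to p^mR/p^{m+1}R$, and then combines the layers of the filtration $R\supseteq pR\supseteq\cdots\supseteq p^kR$. Two details in your version improve on the paper's. You multiply the layer sizes, whereas the paper writes a sum. You also state explicitly the hypothesis that $p\neq 0$ in $R$, which is automatic when $\mathrm{Frac}(R)$ is a number field and without which the statement fails (e.g.\ for $R=\F_p$ and $k=2$).
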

\begin{proof}
    Note that multiplication by $p^m$ from $R$ to $p^mR$ and from $pR$ to $p^{m+1} R$ are bijections and thus, the canonically induced map, $\sfrac{R}{pR}\longrightarrow \sfrac{p^mR}{p^{m+1}R}$ is also a bijection.
This gives us $|\sfrac{p^mR}{p^{m+1}R}|=|\frac{R}{pR}|.$ The lemma follows from the fact $|\sfrac{R}{p^kR}|=\sum_{i=0}^{k-1}|\sfrac{p^i R}{p^{i+1}R}|.$
\end{proof}
\begin{discussion}\label{R/I-finite}
    It follows that, if $R$ is a subring of a number-field, then $\sfrac{R}{nR}$ is a finite module for any integer $n.$

    Thus, given a non-zero proper ideal of $R,$ say $I,$ one may take any integral element $\alpha$ in said ideal and note that the norm of $\alpha$ is inside $\alpha\Z[\alpha]\in \alpha R$.
Thus, we have the following embeddings.
    \[
    I/(\alpha) \hookrightarrow R/(\alpha)\hookrightarrow  R/(\nm(\alpha)R), \textit{ and } R/I\hookrightarrow R/(\nm(\alpha)R).
\]
    Thus, both $I/(\alpha)$ and $R/I$ are finite. Thus, the following two propositions follow.
\begin{proposition}
        If $R$ is a sub-ring of a number-field $\K$, then it is Noetherian and has Krull dimension one (that is, every non-zero prime ideal is maximal as it generates a finite integral domain.).
\end{proposition}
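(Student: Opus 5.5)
The plan is to derive both claims from the finiteness facts in the discussion above. That discussion shows that for any non-zero ideal $I$ of $R$, both $R/I$ and $I/(\alpha)$ are finite. Here $\alpha\in I$ is a non-zero element that is integral over $\Z$; one can always find such an element by clearing denominators of any non-zero element of $I$. Indeed, if $x\in I$ satisfies a monic equation over $\Q$, then $dx$ is integral for a suitable integer $d$, and $dx\in I$. Throughout we use that $\nm(\alpha)\in\alpha\Z[\alpha]\subseteq \alpha R$, so that $\nm(\alpha)R\subseteq \alpha R\subseteq R$. By \Cref{lem1-noeth} and \Cref{lem2-noeth}, the quotient $R/\nm(\alpha)R$ is finite: factor $\nm(\alpha)$ into prime powers and apply the Chinese remainder theorem to $\Z$-scalars, or simply filter $R/nR$ by the successive prime factors of $n$.

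\textbf{Noetherianity.} Take an ascending chain $I_1\subseteq I_2\subseteq\cdots$ of ideals of $R$. If every $I_j$ is zero the chain is stationary. Otherwise fix some $I_{j_0}\neq 0$ and a non-zero integral $\alpha\in I_{j_0}$. Every later $I_j$ contains $\alpha R\supseteq \nm(\alpha)R$. The ideals $I_j$ with $j\ge j_0$ therefore correspond to a chain of subgroups of the finite group $R/\nm(\alpha)R$, so the chain must stabilize. Equivalently, each ideal $I$ is generated by $\alpha$ together with lifts of the finitely many elements of $I/\nm(\alpha)R$, which shows directly that every ideal is finitely generated.

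\textbf{Dimension one.} Let $\p$ be a non-zero prime ideal. By the discussion, $R/\p$ is finite, and it is an integral domain. A finite integral domain is a field, since multiplication by a non-zero element is an injective self-map of a finite set and hence surjective. So $\p$ is maximal. For the Krull dimension to be exactly one rather than zero, we also need $R$ not to be a field. This needs care: the statement implicitly assumes $R$ is an order-like ring, because for instance $R=\K$ is a subring of a number field with dimension zero. I would note this caveat and interpret the claim as saying that every non-zero prime is maximal, which is what the parenthetical asserts. Then $\dim R\le 1$, with equality whenever $R$ has a non-zero prime, e.g. when $R\cap\Q\neq\Q$.

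The main obstacle is the finiteness of $R/nR$. This is exactly \Cref{lem1-noeth} combined with \Cref{lem2-noeth} together with a multiplicative argument over the prime factors of $n$. I would handle it by the filtration $R\supseteq p_1R\supseteq p_1p_2R\supseteq\cdots$, whose successive quotients are isomorphic to $R/p_iR$ via multiplication by the preceding product, since $R$ is a domain. Everything else is formal once this is in place.
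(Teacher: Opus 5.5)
This is correct and is essentially the paper's argument. Both go through finiteness of $R/nR$ via \Cref{lem1-noeth} and \Cref{lem2-noeth}, then pick a non-zero integral $\alpha\in I$ with $\nm(\alpha)\in\alpha\Z[\alpha]\subseteq\alpha R$, so that $R/I$ and $I/\alpha R$ are finite, which gives both Noetherianity and maximality of non-zero primes. The details you add are all sound and fill gaps the paper leaves implicit: clearing denominators to produce $\alpha$, the filtration for composite $n$, treating $R/\nm(\alpha)R\to R/I$ as a quotient map (the paper writes these arrows as embeddings), and the caveat that a field such as $R=\K$ has dimension zero, so in general only $\dim R\le 1$ holds.
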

    \begin{proposition}
        If $R$ is a sub-ring of a number-field $\K$, then $\overline{R}$ is a Dedekind domain.
\end{proposition}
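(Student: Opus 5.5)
The plan is to verify the three defining properties of a Dedekind domain for $\overline{R}$: it is Noetherian, it is integrally closed, and it has Krull dimension one. Integral closedness is formal. $\overline{R}$ is the integral closure of $R$ in $Frac(R)$, and integrality is transitive, so any element of $Frac(\overline{R})=Frac(R)$ that is integral over $\overline{R}$ is already integral over $R$. Hence $\overline{R}$ is integrally closed.

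The other two properties should come from the preceding Proposition, once we check that $\overline{R}$ is itself a subring of a number field. First I will check that $Frac(R)$ is a number field. The statement says $R$ is a subring of $\K$, so $Frac(R)$ is a subfield of $\K$ containing $\Q$, provided $R$ has characteristic zero. This holds because $1\in R$ generates a copy of $\Z$, and $\Z$ has no torsion inside $\K$. So $Frac(R)$ is a subfield $\K'\subseteq\K$, and therefore a number field. Since $\overline{R}\subseteq \K'$, $\overline{R}$ is a subring of the number field $\K'$, and indeed of $\K$ as well. Applying the previous Proposition to $\overline{R}$ in place of $R$ then gives that $\overline{R}$ is Noetherian of Krull dimension one.

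It remains to justify that the previous Proposition really applies to an arbitrary subring, with no finite generation hypothesis. Its proof only used Lemma~\ref{lem1-noeth} and Lemma~\ref{lem2-noeth} together with Discussion~\ref{R/I-finite}, and these need only that $R$ is a domain whose fraction field is a number field. So the argument carries over verbatim to $\overline{R}$. Concretely, for a nonzero ideal $I\subseteq\overline{R}$ we pick $\alpha\in I$ nonzero. Then $\alpha$ is integral over $\Z$, since it is integral over $R\supseteq \Z$...

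This is the delicate point, and I expect it to be the main obstacle. Elements of $R$ need not be integral over $\Z$; for example $R$ could be $\Z[1/2]$. To handle this I would replace $\alpha$ by a nonzero integer multiple $d\alpha$ lying in $\OK$, which still lies in $I$. Its norm $N$ is then a nonzero integer lying in $d\alpha\,\Z[d\alpha]\subseteq I$, so $\overline{R}/I$ is a quotient of $\overline{R}/N\overline{R}$. The latter is finite by Lemmas~\ref{lem1-noeth} and~\ref{lem2-noeth}, so $\overline{R}/I$ is finite.

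Finiteness of all such quotients gives both remaining properties. For dimension one: every nonzero prime $\p$ yields a finite domain $\overline{R}/\p$, which is a field, so $\p$ is maximal. For Noetherianity: given an ascending chain of ideals starting at a nonzero $I$, all of its members contain $I$, so the chain corresponds to a chain of submodules of the finite quotient $\overline{R}/I$ and must stabilise. Combining these with integral closedness, $\overline{R}$ is a Dedekind domain.
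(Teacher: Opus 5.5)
Your proposal is correct and follows the paper's route. A nonzero integer (the norm of a nonzero element of $I$) lies in $I$, so Lemmas~\ref{lem1-noeth} and~\ref{lem2-noeth} make $\overline{R}/I$ finite, which gives Noetherianity and dimension one, and integral closedness is formal. Your rescaling $\alpha\mapsto d\alpha\in\OK$ and your check that $Frac(R)$ is a number field spell out what the paper's ``take any integral element'' leaves implicit.
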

\end{discussion}
\begin{notation}
    Given an integral domain $R,$ we define 
        \[
        M(R):=\{\p\subseteq R: \p \textit{ is a non-zero prime ideal in } R\}.
    \]
\end{notation}
    
\begin{notation}
        If $I$ and $J$ are $\Z$-sub-modules of $\K,$ we denote by $I\cdot J$ as the $\Z$-sub-module of $\K$ that is generated by products $i\cdot j$ where $i\in I$ and $j\in J.$   
\end{notation}

Since we will use these standard facts from commutative algebra repeatedly, we will write it here.
\begin{theorem}\label{localintersection} If $R$ is an integral domain, then
    \[
    R=\underset{\rho\in M(R)}{\cap} R_\rho.
\]
\end{theorem}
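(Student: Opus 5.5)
The plan is the standard "ideal of denominators" argument, with a small reduction to handle the indexing set $M(R)$ of non-zero primes rather than maximal ideals. Both sides are regarded as subrings of $Frac(R)$. The inclusion $R\subseteq \bigcap_{\rho\in M(R)} R_\rho$ is immediate, since $R\subseteq R_\rho$ for every prime $\rho$. The content is the reverse inclusion.

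First I dispose of the degenerate case. If $R$ is a field, then $M(R)=\emptyset$, and the empty intersection inside $Frac(R)$ is $Frac(R)=R$ by convention. So I assume $R$ is not a field. Then every maximal ideal $\mathfrak m$ of $R$ is non-zero: if $(0)$ were maximal, $R$ would be a field. Hence every maximal ideal lies in $M(R)$, and
\[
\bigcap_{\rho\in M(R)} R_\rho\subseteq \bigcap_{\mathfrak m \textit{ maximal}} R_{\mathfrak m}.
\]
It therefore suffices to show that an element lying in every $R_{\mathfrak m}$ lies in $R$.

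Take $x\in Frac(R)$ with $x\in R_{\mathfrak m}$ for every maximal ideal $\mathfrak m$. Consider the denominator ideal
\[
I:=\{r\in R: rx\in R\}.
\]
This is clearly an ideal of $R$. Suppose $I\neq R$. Then, by Zorn's lemma (or, for subrings of number fields, by the Noetherianity in the Proposition of Discussion \ref{R/I-finite}), $I$ is contained in some maximal ideal $\mathfrak m$. Since $x\in R_{\mathfrak m}$, we can write $x=a/s$ with $a\in R$ and $s\in R\setminus\mathfrak m$. Then $sx=a\in R$, so $s\in I\subseteq \mathfrak m$, a contradiction. Hence $1\in I$, that is, $x\in R$.

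There is no real obstacle here. The only point needing care is the bookkeeping between $M(R)$ (non-zero primes) and the set of maximal ideals, which the reduction in the second paragraph handles. That reduction needs only that maximal ideals are non-zero when $R$ is not a field; the Krull-dimension-one statement is not required.
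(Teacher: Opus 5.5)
Your proof is correct. The paper gives no proof of this statement; it is recorded as a standard commutative-algebra fact. Your denominator-ideal argument is the standard proof, and your reduction from the non-zero primes in $M(R)$ to the maximal ideals, with the field case handled by the empty-intersection convention, supplies the indexing detail the paper leaves implicit.
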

\begin{theorem}\label{localizationofintegralclosure}
        If $J$ is some multiplicatively closed subset of an integral domain $R$, then 
    \[
    J^{-1} \overline{R} = \overline{J^{-1} R}.
\]
\end{theorem}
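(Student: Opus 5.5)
We show both inclusions in $J^{-1}\overline{R}=\overline{J^{-1}R}$, where we may assume $0\notin J$ (otherwise both sides are the zero ring and there is nothing to prove). Under this assumption every element of $J$ is a nonzero element of $R$, hence invertible in $Frac(R)$. So $J^{-1}R$ and $J^{-1}\overline{R}$ are subrings of $Frac(R)$ with $R\subseteq J^{-1}R\subseteq Frac(R)$. In particular $Frac(J^{-1}R)=Frac(R)$, and both integral closures are taken inside the same field.

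\emph{Inclusion $J^{-1}\overline{R}\subseteq\overline{J^{-1}R}$.} Take $x/s$ with $x\in\overline{R}$ and $s\in J$. Then $x$ satisfies a monic relation $x^n+a_{n-1}x^{n-1}+\cdots+a_0=0$ with $a_i\in R$. Dividing by $s^n$ gives
\[
(x/s)^n+(a_{n-1}/s)(x/s)^{n-1}+\cdots+a_0/s^n=0,
\]
whose coefficients $a_i/s^{n-i}$ lie in $J^{-1}R$. Hence $x/s$ is integral over $J^{-1}R$.

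\emph{Inclusion $\overline{J^{-1}R}\subseteq J^{-1}\overline{R}$.} Let $y\in Frac(R)$ satisfy
\[
y^n+(b_{n-1}/s_{n-1})y^{n-1}+\cdots+b_0/s_0=0,
\]
with $b_i\in R$ and $s_i\in J$. Put $s=\prod_i s_i\in J$, which is possible since $J$ is multiplicatively closed. Multiplying the relation through by $s^n$ gives a monic equation for $sy$:
\[
(sy)^n+c_{n-1}(sy)^{n-1}+\cdots+c_0=0,\qquad c_i=s^{n-i}\,b_i/s_i\in R .
\]
Here $c_i\in R$ because $s_i$ divides $s$ in $R$. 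Therefore $sy\in\overline{R}$, and so $y=(sy)/s\in J^{-1}\overline{R}$.

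The only delicate point is the bookkeeping in the second inclusion: clearing all denominators with the single common element $s\in J$, and checking that each new coefficient $c_i$ really lies in $R$. Everything else is routine.
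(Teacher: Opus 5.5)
Your proof is correct, and it is essentially the argument the paper relies on: the paper gives no proof of its own and only cites Stacks Project Lemma 0307, whose proof is the same denominator-clearing argument in both directions. Stacks works with an arbitrary ring map $R\to S$, where an extra element of the multiplicative set is needed to make the cleared relation hold in $S$ itself; your domain hypothesis lets you skip that step, and your treatment of $0\in J$ as a degenerate case is harmless, since the paper only uses $J=R\setminus\rho$.
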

See \cite[\href{https://stacks.math.columbia.edu/tag/0307}{Lemma 0307}]{stacks-project}
\begin{discussion}
    If $S$ is a Dedekind domain, whose fractional field is $\K,$ a number-field, then $\OK\subseteq S$, as $S$ is integrally closed and thus must contain the integral closure of $\Z$ in $\K,$ which is $\OK.$ Thus, given a non-zero prime ideal in $S$, say $\p$, then $\varrho:=\p\cap\OK$ is naturally a non-zero prime ideal in $\OK.$  
    
    Since $S$ is a Dedekind domain, $S_{\p}$ is a Discrete Valuation Ring (we will call this DVR from now on) and since $(\OK\backslash \varrho) \subseteq (S\backslash \p),$ we get $(\OK)_{\varrho}\subseteq 
S_{\p}.$ Now, Ostrowski's theorem tells us that the non-archimedean metric induced by $\varrho$ and $\p$ on $\K$ must be  $|\cdot|_\varrho$ (the metric induced by $\p$ when restricted to $\OK$ corresponds to $|\cdot|_\varrho$).
And thus, 
    \[
    \{x\in \K:|x|_\varrho\le 1\} = (\OK)_{\varrho} = S_{\p}.
\] 
    Furthermore, a natural consequence of Ostrowski's is that distinct non-archimedean metrics correspond to distinct primes,(as distinct primes naturally induce distinct metrics).
Thus, we may conclude the following. 
    \begin{proposition}
        If $S$ is a Dedekind domain with fractional field as $\K,$ a number-field, then $M(S)$ naturally injects into $M(\K)$.
\end{proposition}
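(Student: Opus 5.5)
We have to show that the map $\Phi: M(S)\to M(\K)$, $\p\mapsto |\cdot|_{\p\cap\OK}$, is well-defined and injective. The plan is to factor $\Phi$ as the contraction $\p\mapsto \p\cap\OK$ from $M(S)$ to $M(\OK)$, followed by Ostrowski's identification $M(\OK)\cong M(\K)$. Since the second map is a bijection by the Notation above, everything reduces to showing that contraction is well-defined and injective.

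\emph{Well-definedness.} As in the preceding discussion, $\OK\subseteq S$ because $S$ is integrally closed in $\K$. Hence $\varrho:=\p\cap\OK$ is a prime ideal of $\OK$. It is non-zero: pick $0\neq x\in\p$. Then $x$ is algebraic over $\Q$, so some non-zero integer multiple $nx$ lies in $\OK$, and $nx\in\p$ as well. (Equivalently, $\nm(nx)$ is a non-zero rational integer lying in $\p\cap\Z$.) Thus $\varrho\in M(\OK)$.

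\emph{Injectivity.} This is the main step. Suppose $\p,\q\in M(S)$ contract to the same $\varrho$. The discussion gives $(\OK)_\varrho\subseteq S_\p$, and both are DVRs with fraction field $\K$. A DVR is a maximal proper subring of its fraction field: any ring strictly between a valuation ring and its fraction field is the whole field, because a valuation ring's overrings are localizations at primes, and a DVR has only the primes $0$ and its maximal ideal. Since $S_\p\neq\K$, we conclude $S_\p=(\OK)_\varrho$, and likewise $S_\q=(\OK)_\varrho$.

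Now recover the primes from their localizations: $\p = \p S_\p\cap S$ and $\q=\q S_\q\cap S$. Both maximal ideals $\p S_\p$ and $\q S_\q$ equal the unique maximal ideal $\varrho(\OK)_\varrho$ of the same DVR. Therefore $\p=\q$. This is exactly the paper's remark that distinct primes induce distinct metrics: the metric determines its valuation ring $\{|x|\le1\}$, and that ring's maximal ideal $\{|x|<1\}$ meets $S$ in the prime.

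The only delicate point is the maximality of a DVR among subrings of $\K$. If that needs justification, I would argue directly. Take $y\in S_\p\setminus(\OK)_\varrho$. Then $v_\varrho(y)<0$, so $y^{-1}$ lies in the maximal ideal of $(\OK)_\varrho$. Since $(\OK)_\varrho\subseteq S_\p$, this forces $y^{-1}\in\p S_\p$. But $y\in S_\p$ as well, so $1=y\cdot y^{-1}\in\p S_\p$, a contradiction.
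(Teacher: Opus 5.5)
Your proof is correct and follows essentially the same route as the paper: contract $\p$ to $\varrho=\p\cap\OK$, note $(\OK)_\varrho\subseteq S_\p$, conclude that these DVRs coincide, and recover $\p$ from its localization. The one difference is justification. Where the paper invokes Ostrowski's theorem for the equality $S_\p=(\OK)_\varrho$ and for ``distinct primes induce distinct metrics,'' you prove it directly, via maximality of a DVR among proper subrings of $\K$ and the identity $\p=\p S_\p\cap S$. You also check that $\varrho\neq 0$, which the paper leaves implicit.
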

    Furthermore, since any integral domain is the intersection of its localizations at its non-zero prime ideals (\cref{localintersection}), a Dedekind domain, with fractional field as $\K,$ a number-field, is completely determined by the image of $M(S)$ into $M(\K).$
    \begin{notation}    
        By abuse of notation, we treat $M(S)$ as a subset of $M(\K)$ whenever $S$ is a Dedekind domain whose fractional field, $\K,$ is a number-field.
\end{notation}
     Another consequence is:
    \begin{proposition}\label{ROK}
        If $R$ is a sub-ring of  a number-field, $\K,$ then $\overline{R}=R\cdot \OK.$ In particular, $S$ a subring of $\K,$ with fractional field $\K,$ is a Dedekind domain, if and only if $\OK\subseteq S.$
    \end{proposition}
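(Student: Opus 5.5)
We prove the two inclusions $R\cdot\OK\subseteq\overline{R}$ and $\overline{R}\subseteq R\cdot\OK$, and then deduce the characterization of Dedekind subrings.

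\emph{First inclusion.} Every element of $\OK$ is integral over $\Z$, and $\Z\subseteq R$ because $R$ is a unital subring, so $\OK\subseteq\overline{R}$. Since $\overline{R}$ is a ring containing both $R$ and $\OK$, it contains the $\Z$-module generated by products, namely $R\cdot\OK$.

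\emph{Reverse inclusion.} The plan is to use the local description of Dedekind domains from the preceding Discussion. First note that $T:=R\cdot\OK$ is a ring: products of sums of products $r\alpha$ are again such sums, because $R$ and $\OK$ are each closed under multiplication. The ring $T$ contains $\OK$, so it is integral over $R$, since each $r\alpha$ is integral over $R$. We now show that $T$ is integrally closed.

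Let $\q\in M(T)$ and put $\varrho=\q\cap\OK$, which is a non-zero prime of $\OK$. Because $\OK\setminus\varrho\subseteq T\setminus\q$, we have $(\OK)_\varrho\subseteq T_\q\subsetneq\K$. Now $(\OK)_\varrho$ is a DVR, and a DVR is a maximal proper subring of its fraction field. Indeed, any $x\in\K$ outside the DVR has $x^{-1}$ in the maximal ideal, and then every element of $\K$ can be written as $ux^{n}$ times a unit, so adjoining $x$ gives all of $\K$. Hence $T_\q=(\OK)_\varrho$ is a DVR, and in particular integrally closed. By \cref{localintersection}, $T=\cap_\q T_\q$ is an intersection of integrally closed rings with fraction field $\K$, so $T$ is integrally closed. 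Since $R\subseteq T\subseteq\overline{R}$, we conclude $\overline{R}\subseteq\overline{T}=T$.

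\emph{The ``in particular''.} If $S$ is Dedekind, then $\OK\subseteq S$ by the Discussion. Conversely, if $\OK\subseteq S$, then $S=S\cdot\OK=\overline{S}$, and this is Dedekind by the earlier proposition that $\overline{R}$ is a Dedekind domain.

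The main obstacle is the step showing that $T_\q$ is a DVR, that is, the maximality of a DVR among proper subrings of $\K$ together with the check that $T_\q\ne\K$. For the latter we use that $\q$ is a non-zero proper prime: $T_\q$ is local with non-zero maximal ideal $\q T_\q$, so it is not the field $\K$. An alternative route is to invoke \cref{localizationofintegralclosure} to identify $\overline{R}_\p$ with a semilocal Dedekind domain whose localizations at its maximal ideals are of the form $(\OK)_\varrho$. That route still comes down to the same maximality fact.
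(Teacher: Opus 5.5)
Your proof is correct and follows the paper's route. You first establish $\OK\subseteq R\cdot\OK\subseteq\overline{R}$, then show that each localization of $R\cdot\OK$ at a non-zero prime $\q$ contains the DVR $(\OK)_{\q\cap\OK}$ and therefore equals it. Hence $R\cdot\OK$, being the intersection of its localizations, is integrally closed.

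Your maximality-of-DVRs argument gives a cleaner justification of the step the paper attributes to ``a similar argument around Ostrowski's''. Like the paper, you tacitly use $Frac(R)=\K$ in the first inclusion; with the paper's convention for $\overline{R}$, the statement fails for $R=\Z$ when $\K\neq\Q$.
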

    This is because of two facts: one being $\OK\subseteq R\cdot \OK \subseteq \overline{R}$ as integral closure of $\Z$ in $\K$ must be contained in $\overline{R}$ by definition.
The second is that localization of $R\cdot \OK$ at some prime will contain the localization of corresponding prime in $\OK,$ which by similar argument around Ostrowski's as above, will mean that it is equal to the corresponding localization of $\OK$ at corresponding prime ideal, which is a DVR.
Thus, $R\cdot\OK$ must be integrally closed. \qed
    
    Going back to subsets of $M(\K)$ and Dedekind domains with fractional field $\K$, if $M$ is any subset of $M(\K),$ then we may define $J_M:=\{x\in\OK:\forall v\in M, \textit{ we have  } x\notin v\}.$ Then $J_M^{-1}\OK$ is a Dedekind domain as it contains $\OK$ and properties of localization tells us that $M(J_M^{-1}\OK)=M.$ 
    We thus get:
    \begin{theorem}
        Given a number-field $\K,$ we have the following bijection:
    \begin{align*}
    \bigg\{\substack{\textit{Dedekind Domains}\\\textit{ with fractional field $\K$}}\bigg\} &\longleftrightarrow\bigg\{\textit{ Subsets of } M(\K)\bigg\}\\
        S&\longmapsto M(S),
    \end{align*}
    such that $S_1\subseteq S_2 \iff M(S_2)\subseteq M(S_1).$ Note that $J_{M(S)}^{-1}\OK=S$ and $M(J_{M}^{-1}\OK)=M.$
    \end{theorem}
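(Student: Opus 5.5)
We prove the bijection by exhibiting the two maps $S\mapsto M(S)$ and $M\mapsto J_M^{-1}\OK$ and checking that they are mutually inverse and order-reversing. First we make sure both maps are well defined. For a Dedekind domain $S$ with $Frac(S)=\K$, the preceding Proposition identifies $M(S)$ with a subset of $M(\K)$ via $\p\mapsto \p\cap\OK$. For any $M\subseteq M(\K)$, the set $J_M=\OK\setminus\bigcup_{v\in M} v$ is multiplicatively closed, being the complement of a union of primes. So $J_M^{-1}\OK$ is a subring of $\K$ containing $\OK$, and by \cref{ROK} it is a Dedekind domain.

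\textbf{Step 1: $M(J_M^{-1}\OK)=M$.} The primes of $J_M^{-1}\OK$ correspond to the primes $\varrho$ of $\OK$ with $\varrho\cap J_M=\emptyset$, that is, $\varrho\subseteq\bigcup_{v\in M}v$. If $M$ is finite, prime avoidance gives $\varrho\subseteq v$ for some $v\in M$, and hence $\varrho=v$ by maximality, since $\OK$ has dimension one. \textbf{This is the main obstacle:} when $M$ is infinite, prime avoidance is unavailable. Here I would use that the class group of $\OK$ is finite. Let $h$ be the class number. Then $\varrho^h=(x)$ for some $x\in\OK$, and $x$ lies in no prime other than $\varrho$. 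So if $\varrho\notin M$, then $x\in J_M\cap\varrho$, which contradicts $\varrho\cap J_M=\emptyset$. This shows $M(J_M^{-1}\OK)\subseteq M$. For the reverse inclusion, each $v\in M$ misses $J_M$ by definition, so $v$ survives localization.

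\textbf{Step 2: $J_{M(S)}^{-1}\OK=S$.} Put $M=M(S)$ and $T=J_M^{-1}\OK$.

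First we show $T\subseteq S$. If $x\in J_M$, then $x$ lies in no prime of $S$ (since $\p\cap\OK\in M$), so $x$ is a unit in $S$. Together with $\OK\subseteq S$, this gives $T\subseteq S$.

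Next we show $S\subseteq T$. Both rings are Dedekind and both have prime set $M$, by Step 1. For $\p\in M(S)$ lying over $\varrho$, the Discussion gives $S_\p=(\OK)_\varrho$, and the same argument gives $T_{\p T}=(\OK)_\varrho$. Applying \cref{localintersection} to each ring, $S=\bigcap_{\varrho\in M}(\OK)_\varrho=T$.

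This identity also shows that $M(S)$ determines $S$, which gives injectivity. Step 1 gives surjectivity, so the two maps are mutually inverse.

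\textbf{Step 3: the order relation.} Suppose $M(S_2)\subseteq M(S_1)$. Then $S_1=\bigcap_{\varrho\in M(S_1)}(\OK)_\varrho\subseteq\bigcap_{\varrho\in M(S_2)}(\OK)_\varrho=S_2$.

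Conversely, suppose $S_1\subseteq S_2$ and let $\varrho\in M(S_2)$, with $x$ a generator of $\varrho^h$ as in Step 1. If $\varrho\notin M(S_1)$, then $x\in J_{M(S_1)}$, so $x$ is a unit in $S_1$ and hence a unit in $S_2$. But $x\in\varrho S_2\neq S_2$, which is a contradiction. Therefore $M(S_2)\subseteq M(S_1)$.

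The only non-formal input is the finiteness of the class group, used in Step 1 and again in Step 3. Everything else follows from \cref{localintersection} and the DVR identification via Ostrowski's theorem.
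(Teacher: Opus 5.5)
Your proof is correct, and its skeleton matches the paper's. Injectivity comes from \cref{localintersection} together with the identification $S_\p=(\OK)_\varrho$, and surjectivity comes from the ring $J_M^{-1}\OK$.

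The real difference is in your Step 1. The paper simply asserts that ``properties of localization'' give $M(J_M^{-1}\OK)=M$. That is formal only when $M$ is finite, via prime avoidance. For infinite $M$ it is not formal at all. Take $M=M(\K)\setminus\{\varrho\}$: the claim then says exactly that $\varrho\not\subseteq\bigcup_{v\neq\varrho}v$. Equivalently, some element of $\OK$ lies in $\varrho$ and in no other prime, i.e.\ some power of $\varrho$ is principal, i.e.\ $[\varrho]$ is torsion in the class group. This genuinely fails for Dedekind domains with non-torsion class group. So your appeal to the finiteness of the class group is not a detour; it is precisely the input the paper's sketch leaves out. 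Your argument also handles finite and infinite $M$ uniformly, so the prime-avoidance case is redundant.

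You also prove the order-reversal, which the paper only states. In the direction $S_1\subseteq S_2\Rightarrow M(S_2)\subseteq M(S_1)$ you do not need the class group. For $\p\in M(S_2)$, the contraction $\p\cap S_1$ is a nonzero prime of $S_1$, since it contains $\p\cap\OK\neq 0$. Moreover $(\p\cap S_1)\cap\OK=\p\cap\OK$.

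One trivial edge case is shared with the paper. For $M=\emptyset$, the set $J_M$ as written contains $0$, so you should remove $0$ from $J_M$; this gives $J_\emptyset^{-1}\OK=\K$. Whether $\K$ itself counts as a Dedekind domain is a matter of convention.
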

\end{discussion}

\begin{corollary}\label{Idealupdown}
    If $I$ is an ideal in some Dedekind Domain $S$ with $Frac(S)=\K$, then we get  $I\cap \OK$ is an ideal of $\OK$ with the same prime factorization as $I$ in $S.$ In particular, 
    $$(I\cap \OK) \cdot S =I.$$
    Furthermore, since localization of $S$ at every prime matches the localization of $\OK$ at the corresponding prime ideal, we can say that the norm of $I$ in $S$ is the same as the norm of $I\cap \OK$ in $\OK.$ It follows that  
    \[
    \sfrac{\OK}{I\cap \OK} \simeq \sfrac{S}{I}.
    \]
\end{corollary}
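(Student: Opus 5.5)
The plan is to work prime by prime, using the dictionary between $S$ and $\OK$ from the previous theorem. Since $S$ is a Dedekind domain with fraction field $\K$, \cref{ROK} gives $\OK\subseteq S$, and $S=J_{M(S)}^{-1}\OK$ is a localization of $\OK$. For each $\p\in M(S)$, write $\varrho=\p\cap\OK$. The preceding discussion identifies $S_\p=(\OK)_\varrho$, and these primes $\varrho$ are exactly the elements of $M(S)\subseteq M(\K)$.

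First I show that $I\cap\OK$ has the same factorization as $I$. Write $I=\prod_{\p}\p^{a_\p}$ in $S$, so that $IS_\p=\p^{a_\p}S_\p$. Since $S=J^{-1}\OK$ with $J=J_{M(S)}$, standard localization gives $I=J^{-1}(I\cap\OK)$. Localizing further at $\varrho$ (which avoids $J$) gives $(I\cap\OK)_\varrho=IS_\p=\varrho^{a_\p}(\OK)_\varrho$.

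For primes $\varrho'$ of $\OK$ not in $M(S)$, the set $J$ meets $\varrho'$. We need $I\cap\OK\not\subseteq\varrho'$, that is, $(I\cap\OK)_{\varrho'}=(\OK)_{\varrho'}$.
\begin{itemize}
\item If $I=S$ this is trivial.
\item Otherwise, pick $x\in\OK$ with $x\equiv 1\pmod{\varrho'}$ lying in a high power of each of the finitely many $\varrho$ dividing $I\cap\OK$ from $M(S)$. This is possible by CRT in the Dedekind domain $\OK$.
\item Such an $x$ lies in $IS_\p$ for every $\p$, so by \cref{localintersection} it lies in $I$. Since $x\in\OK$, it lies in $I\cap\OK$ and not in $\varrho'$.
\end{itemize}
Therefore $I\cap\OK=\prod\varrho^{a_\p}$, with exactly the exponents of $I$.

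Next, $(I\cap\OK)\cdot S=I$. The inclusion $\subseteq$ is clear. For the reverse, compare localizations at every $\p\in M(S)$: both sides equal $\p^{a_\p}S_\p$. Then apply \cref{localintersection}, noting that ideals in a domain are determined by their localizations at maximal ideals.

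For the isomorphism $\OK/(I\cap\OK)\simeq S/I$, consider the natural map $\OK\to S/I$. Its kernel is $I\cap\OK$, so the induced map $\OK/(I\cap\OK)\to S/I$ is injective. For surjectivity:
\begin{itemize}
\item Both $S/I$ and $\OK/(I\cap\OK)$ decompose by CRT as $\prod_\p S_\p/\p^{a_\p}S_\p$ and $\prod_\varrho(\OK)_\varrho/\varrho^{a_\p}(\OK)_\varrho$ respectively.
\item Their factors agree because $S_\p=(\OK)_\varrho$.
\item So the two quotients are finite of equal size, and the injection is a bijection.
\end{itemize}
This equality of sizes is also the claimed equality of norms.

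The main obstacle is the step showing that $I\cap\OK$ is not contained in primes of $\OK$ outside $M(S)$. This is where the CRT element is needed, and I expect to argue it as above. A finite-generation remark is needed too: $I$ is finitely generated, so its generators have a common denominator in $J$, which justifies $I=J^{-1}(I\cap\OK)$.
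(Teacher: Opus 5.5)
Your proposal is correct and follows the route the paper itself indicates. You identify $S_\p=(\OK)_{\p\cap\OK}$ prime by prime, deduce that $I\cap\OK$ has the same factorization and hence the same norm as $I$, and conclude via the natural injection $\OK/(I\cap\OK)\hookrightarrow S/I$ between finite sets of equal size, supplying details the paper leaves implicit, notably the CRT argument that $I\cap\OK$ lies in no prime of $\OK$ outside $M(S)$. Two minor remarks: the finite-generation step is unnecessary, since $x=a/j\in I$ with $j\in J$ already gives $a=jx\in I\cap\OK$; and the statement tacitly requires $I\neq 0$, which your factorization assumes.
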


\section{Local Algebras to Local Orders:  Separation Lemmata}
The objective of this section is to establish finer control over the index of an order $\O$ in the ring of integers $\OK$ of a number field $\K$. Specifically, we show that the localizations of a given order multiplicatively decompose this index, noting that these localizations only contribute non-trivially when they fail to be Discrete Valuation Rings (DVRs). 

To formalize this structure, we consider orders that differ from being a DVR at exactly one prime ideal; we define these as \textit{irreducible orders}. By expressing these irreducible orders as the intersection of a local non-DVR integral domain (with fractional field $\K$) and $\OK$, and noting \cref{localintersection}, we can write any order as a unique, finite intersection of irreducible orders over which the index distributes multiplicatively. Because the prime ideals corresponding to non-DVR localizations are governed by the conductor, the conductor itself splits multiplicatively into pairwise co-prime factors along this decomposition.

Beyond its immediate application in an upcoming paper on weighted enumeration of number-fields, this decomposition has implications for the structure of the local factors of $L$-functions that count orders in $\OK$ by discriminant (or, equivalently, by index). This framework also provides the necessary vocabulary for generalizing Dedekind-Kummer type theorems for more general types of Algebraic-Geometrically parametrized rings. For context, it is useful to view the classical Dedekind-Kummer theorem as a result on monogenic rings (those naturally associated to elements in  $\A^1(\Zbar)$, and the theorem of Corso et al. \cite{Corso2005DecompositionOP} as its analogue for binary rings (those naturally associated to elements in $\P^1(\Zbar)$.). Actually, in this context a better way to recognize a ring is by its geometric singularities. However, certain geometric singularities are already captured by $\OK$ by its ramified primes, thus we set up $S(R)$ as the singularities in $\O$ that do not match those of $\OK.$ Total set of singularities of $R$ is thus identified by $S(R)\cup \{v\in M(R)\backslash S(R): v\textit{ ramifies in } R\}$. We view $\{v\in M(R)\backslash S(R): v\textit{ ramifies in } R\}$ as simple singularities as these behave like DVRs, and $S(R)$ as the remaining more complex singularities (which are resolvable into simple ones or non singularities by moving to the integral closure). 

The apparatus developed here tracks these irreducible orders, their non-DVR localizations, and their associated conductors. We achieve this by identifying the set of primes at which the localization of a given ring is not a DVR, while simultaneously tracking the corresponding primes in its integral closure. We begin by proving a slight variation of standard algebraic number theory results regarding conductors and orders, generalized here for arbitrary sub-rings. (We suspect this specific variation may already exist in the standard commutative algebra literature).

We begin with the following lemma.
\begin{lemma}\label{coveringlemma}
    If $R$ is an integral domain, then $M(\overline{R})$ can be seen as a cover of $M(R)$ via the map $\pi:v\longrightarrow v\cap R.$ In other words, $\pi$ is surjective.
\end{lemma}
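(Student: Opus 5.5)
The claim is that lying over holds for the integral extension $R\subseteq\overline{R}$. First, $\pi$ is well defined: if $v\in M(\overline{R})$, then $v\cap R$ is a prime of $R$. It is non-zero because any non-zero $x\in v$ is integral over $R$, so it satisfies a monic equation $x^n+a_{n-1}x^{n-1}+\cdots+a_0=0$ with $a_i\in R$. Take this equation of minimal degree. Then $a_0\neq 0$: otherwise, since $\overline{R}$ is a domain, we could cancel a factor of $x$ and get a monic equation of smaller degree. Rearranging gives $a_0=-x(x^{n-1}+\cdots+a_1)\in v\cap R$, so $v\cap R\neq 0$.

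Surjectivity is the substantive part. Fix $\rho\in M(R)$ and localize at $J=R\backslash\rho$. By \cref{localizationofintegralclosure}, $J^{-1}\overline{R}=\overline{R_\rho}$, so $\overline{R_\rho}$ is integral over the local ring $R_\rho$.

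The key step is to show $\rho\overline{R_\rho}\neq\overline{R_\rho}$. Suppose instead $1=\sum_{i=1}^k r_ib_i$ with $r_i\in\rho R_\rho$ and $b_i\in\overline{R_\rho}$. Let $B=R_\rho[b_1,\dots,b_k]$. This is a finitely generated $R_\rho$-module, since each $b_i$ is integral. It also satisfies $\rho B=B$. Nakayama's lemma over the local ring $R_\rho$ then forces $B=0$, which is a contradiction.

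Hence $\rho\overline{R_\rho}$ lies in some maximal ideal $\mathfrak{m}$ of $\overline{R_\rho}$. Then $\mathfrak{m}\cap R_\rho$ contains the maximal ideal $\rho R_\rho$ and does not contain $1$, so $\mathfrak{m}\cap R_\rho=\rho R_\rho$.

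Now pull back. Set $v=\mathfrak{m}\cap\overline{R}$, which is a prime of $\overline{R}$. Then $v\cap R=\mathfrak{m}\cap R=(\mathfrak{m}\cap R_\rho)\cap R=\rho R_\rho\cap R=\rho$, using the standard correspondence of primes under localization. Since $\rho\neq0$, we get $v\neq 0$, so $v\in M(\overline{R})$ and $\pi(v)=\rho$.

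The main obstacle is the non-properness of $\rho\overline{R_\rho}$, i.e. the Nakayama argument. One has to check that the finiteness of $B$ over $R_\rho$ really follows from integrality of finitely many elements, which is standard. In the setting of the paper, where $R$ sits in a number field, one could alternatively avoid Nakayama by using the finiteness results of \cref{R/I-finite}.
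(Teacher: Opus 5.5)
Your proof is correct and takes the same route as the paper. The paper checks that $v\cap R$ is a proper prime, as the kernel of $R\hookrightarrow\overline{R}\to\overline{R}/v$, and then simply cites lying-over for the integral extension $R\subseteq\overline{R}$ (Stacks Project, Tag 00GQ); you instead prove lying-over directly by the standard localization-plus-Nakayama argument, and you also verify that $v\cap R\neq 0$ via the constant term of a minimal monic equation, which the paper leaves implicit but which is needed for $\pi$ to land in $M(R)$.
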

Note that $v\cap R$ has to be a proper ideal as $1\notin R\cap v$ and has to be prime as it is the kernel of the sequence of maps that maps it into an integral domain, namely $R\hookrightarrow \overline{R}\rightarrow  \sfrac{\overline{R}}{v}.
$

The surjectivity of the above map is standard fact in commutative algebra, see 
\cite[\href{https://stacks.math.columbia.edu/tag/00GQ}{Tag 00GQ}]{stacks-project}.
\begin{definition}
    If $\rho\in M(R)$, we define 
    $$
    \overline{\rho}:=\pi^{-1}(\rho),
    $$
    where $\pi$ is the map in \cref{coveringlemma}.
\end{definition}
Again we note that $\{\overline{\rho}:\rho \in M(R)\}$ partitions $M(\overline{R})$. Putting it into the context of previous discussion we get the following proposition.
\begin{proposition}
    If $R$ is a ring whose fractional field is a number-field, $\K,$ then
    \[
    \overline{\rho}=\{v\in M(\K):R_\rho \subseteq (\OK)_v\}.
    \]
\end{proposition}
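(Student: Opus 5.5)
We identify $M(\overline{R})$ with a subset of $M(\K)$: since $\overline{R}=R\cdot\OK$ is a Dedekind domain (\cref{ROK}), the abuse of notation above applies. Under this identification a prime $v\in M(\overline{R})$ is the valuation with $\overline{R}_v=(\OK)_v$, and the correspondence $v\mapsto v\cap\OK$ is the Ostrowski identification. The claim is then the equality
\[
\{v\in M(\overline{R}): v\cap R=\rho\}=\{v\in M(\K): R_\rho\subseteq (\OK)_v\},
\]
which we prove by two inclusions.

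\emph{The inclusion $\subseteq$.} Let $v\in M(\overline{R})$ with $v\cap R=\rho$. Then $R\setminus\rho\subseteq \overline{R}\setminus v$, so $R_\rho\subseteq \overline{R}_v$. By the discussion preceding \cref{ROK}, $\overline{R}_v$ equals $(\OK)_{v\cap\OK}$, the valuation ring $\{x\in\K:|x|_v\le 1\}$. Hence $R_\rho\subseteq(\OK)_v$.

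\emph{The inclusion $\supseteq$.} Let $v\in M(\K)$ with $R_\rho\subseteq(\OK)_v$. Then $\OK\subseteq(\OK)_v$ and $R\subseteq(\OK)_v$, so $\overline{R}=R\cdot\OK\subseteq(\OK)_v$. Therefore $w:=\mathfrak{m}_v\cap\overline{R}$ is a nonzero prime of $\overline{R}$, where $\mathfrak{m}_v$ is the maximal ideal of $(\OK)_v$. It is nonzero because it contains $\mathfrak{m}_v\cap\OK\ne0$. The localization $\overline{R}_w$ is a DVR contained in $(\OK)_v$, and both are valuation rings of $\K$. Since a DVR is maximal among proper subrings of its fraction field, $\overline{R}_w=(\OK)_v$. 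So $w$ corresponds to $v$ under the injection $M(\overline{R})\hookrightarrow M(\K)$, i.e.\ $v\in M(\overline{R})$.

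It remains to check $w\cap R=\rho$, and this is the delicate point. The ideal $w\cap R=\mathfrak{m}_v\cap R$ is a nonzero prime of $R$. If $x\in R\setminus\rho$, then $1/x\in R_\rho\subseteq(\OK)_v$, so $x$ is a unit in $(\OK)_v$ and $x\notin\mathfrak{m}_v$. This gives $\mathfrak{m}_v\cap R\subseteq\rho$. Every nonzero prime of $R$ is maximal (the Krull-dimension-one proposition in \cref{R/I-finite}), so the nonzero prime $\mathfrak{m}_v\cap R$ must equal $\rho$. The main obstacle is keeping the identifications between primes of $\overline{R}$, primes of $\OK$, and valuations consistent. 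The fact that makes this work is that inclusions of DVRs in $\K$ are equalities.
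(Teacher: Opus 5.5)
Your proof is correct. It is the unpacking the paper has in mind: the paper gives no argument beyond saying the proposition follows from the covering map $\pi$ together with the Ostrowski identification $\overline{R}_v=(\OK)_v$, and your reverse inclusion supplies what is left implicit. That step contracts $\mathfrak{m}_v$ to $\overline{R}$ and then to $R$, and uses that nonzero primes of $R$ are maximal. The only claim you state without justification, $\mathfrak{m}_v\cap R\neq 0$, is immediate because it contains the rational prime $p$ with $\mathfrak{m}_v\cap\Z=p\Z$.
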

Since we wish to, in some sense, identify these rings via their non-DVR localizations, we define:
\begin{notation}
    We set 
    \[
    S(R):=\{\rho\in M(R): R_\rho \textit{ is not a DVR.}\}.
    \]
\end{notation}
Note that this set will always be finite. Again reinterpreting $S(R)$ via Ostrowski's theorem, when the fractional field of $R$ is a number-field, we get the following proposition.
\begin{proposition}
    If $R$ is an integral domain whose fractional field is a number-field, $\K,$ then $S(R)$ is finite and can be qualified by 
    \[
    S(R)=\{\rho\in M(R): \exists v\in M(\K), R_\rho \subsetneq (\OK)_v\}.
\]
\end{proposition}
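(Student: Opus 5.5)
The statement has two parts: the characterization of $S(R)$, and its finiteness. I would prove the characterization first and deduce finiteness from it.

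\textbf{Step 1: localizations of $\overline{R}$ at primes above $\rho$.} Fix $\rho\in M(R)$ and set $J=R\setminus\rho$. By \cref{localizationofintegralclosure}, $\overline{R_\rho}=J^{-1}\overline{R}$. This ring is Dedekind, since it is a localization of the Dedekind domain $\overline{R}$; alternatively, it contains $\OK$ and one can apply \cref{ROK}. Its non-zero primes are those $v\in M(\overline{R})$ with $v\cap J=\emptyset$, that is, exactly $\overline{\rho}$. By the preceding proposition, $\overline{\rho}=\{v\in M(\K):R_\rho\subseteq(\OK)_v\}$. Applying \cref{localintersection} to $\overline{R_\rho}$, and using that its local rings are the $(\OK)_v$ (the Ostrowski discussion), we get
\[
\overline{R_\rho}=\underset{v\in\overline{\rho}}{\cap}(\OK)_v,
\]
and in particular $R_\rho\subseteq(\OK)_v$ for every $v\in\overline{\rho}$.

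\textbf{Step 2: the characterization.} Suppose $R_\rho$ is a DVR. It is then integrally closed, so $R_\rho=\overline{R_\rho}$ is a local Dedekind domain with exactly one non-zero prime. Hence $\overline{\rho}=\{v\}$ for a single $v$, and $R_\rho=(\OK)_v$. Any $w\in M(\K)$ with $R_\rho\subseteq(\OK)_w$ lies in $\overline{\rho}$, so $w=v$ and the inclusion is an equality. Thus no strict inclusion exists.

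Conversely, suppose $R_\rho=(\OK)_v$ for every $v$ with $R_\rho\subseteq(\OK)_v$. The set $\overline{\rho}$ is non-empty by \cref{coveringlemma}, so pick any $v$ in it. Then $R_\rho=(\OK)_v$, which is a DVR. Combining both directions, $\rho\notin S(R)$ if and only if no strict inclusion $R_\rho\subsetneq(\OK)_v$ exists, which gives the displayed description.

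\textbf{Step 3: finiteness, which I expect to be the main obstacle.} The plan is to use the conductor $\mathfrak{f}=\{x\in\K: x\overline{R}\subseteq R\}$.

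First, $\mathfrak{f}$ is non-zero. Since $\overline{R}=R\cdot\OK$ by \cref{ROK}, and $\OK$ is a finitely generated $\Z$-module, $\overline{R}$ is a finitely generated $R$-module. Clearing a common denominator of its generators gives a non-zero element of $\mathfrak{f}$.

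Second, suppose $\rho\not\supseteq\mathfrak{f}\cap R$. Choose $s\in(\mathfrak{f}\cap R)\setminus\rho$. Then $s\overline{R}\subseteq R$, so $\overline{R}\subseteq s^{-1}R\subseteq R_\rho$. Hence $\overline{R_\rho}=J^{-1}\overline{R}=R_\rho$, so $R_\rho$ is integrally closed. By Step 1 it is Dedekind, and being local it is a DVR.

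Therefore $S(R)\subseteq\{\rho\in M(R):\rho\supseteq\mathfrak{f}\cap R\}$. This set is finite because $R/(\mathfrak{f}\cap R)$ is finite by \cref{R/I-finite}, and its primes correspond to finitely many maximal ideals.

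The delicate points are two. One must check that $\mathfrak{f}\cap R$ is a non-zero ideal of $R$: this holds since $\mathfrak{f}\subseteq R$ because $1\in\overline{R}$. One must also check the identification of the local rings of $\overline{R_\rho}$ with the $(\OK)_v$; for this I rely on the Ostrowski discussion.
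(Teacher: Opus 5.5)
Your proof is correct and takes essentially the paper's route. The paper justifies the characterization only by ``reinterpreting via Ostrowski's theorem'', i.e.\ the local rings of $\overline{R}$ over $\rho$ are the $(\OK)_v$ with $v\in\overline{\rho}$. Its finiteness claim rests on the conductor argument it develops shortly afterwards in \cref{conductor}: if $c_R\not\subseteq\rho$ then $R_\rho$ is integrally closed, and $R/c_R$ is finite. That is exactly your Step 3, so you have in effect filled in details the paper leaves implicit, including why the conductor is nonzero via $\overline{R}=R\cdot\OK$.
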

Thus, for all $\rho\notin S(R),$ we have $|\overline{\rho}|=1.$ The converse is not necessarily true.
We recall the concept of conductor of an integral domain (from commutative algebra) and its relationship to localizations which are not integrally closed local rings.
\begin{notation}
    If $I,J$ are $\Z$-submodules of a field $\K$, we use the following notation.
\[
    (I:J)_\K:=\{x\in \K:xJ\subseteq I\}.
    \]
\end{notation}

\begin{definition}
    If $R$ is an integral domain with fractional field $\K,$ we define the conductor of $R$ by
    \[
    c_{R}:=(R:\overline{R})_\K=\{x\in \K:x\overline{R}\subseteq R\}.
\]
\end{definition}
Next we will go over some standard properties of $c_R.$
\begin{discussion}
    Note that $x\overline{R}\subseteq R \implies xR\subseteq R$ and $xR\subseteq R \implies x\in R.$ Thus, $c_{R}$ is a subset of $R$ and thus of $\overline{R}$ as well.
Furthermore, if $x\in c_R$ and $y\in \overline{R}$ then $xy\overline{R}\subseteq x\overline{R}\subseteq R \implies xy\in c_R.$ Thus, $c_R$ is also an $\overline{R}$-ideal and therefore also an $R$-ideal.
Further-furthermore, if $J$ is an $\overline{R}$-ideal that is also a $R$-ideal, then by definitional verification, $J\subseteq c_R.$ Also, since $c_R\subseteq R$ we note that:
    \begin{lemma} The set $c_R\backslash\{0\}$ is the set of common denominators for $\overline{R}$ in $R.$ That is, 
        \[
        c_R\backslash\{0\}=\{x\in R:\overline{R}\subseteq \frac{1}{x}R\}.
\]
    \end{lemma}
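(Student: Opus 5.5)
The plan is a direct double inclusion, using only the observation already made in the discussion that $c_R\subseteq R$, together with the definition $c_R=(R:\overline{R})_\K$.

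\textbf{Forward inclusion.} Let $x\in c_R\backslash\{0\}$.
\begin{itemize}
\item Since $c_R\subseteq R$, we have $x\in R$.
\item By definition, $x\overline{R}\subseteq R$.
\item Because $x\neq 0$ and $\K$ is a field, multiplying by $x^{-1}$ is a bijection of $\K$, so this inclusion is equivalent to $\overline{R}\subseteq \frac{1}{x}R$.
\end{itemize}
Hence $x$ lies in the right-hand set.

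\textbf{Reverse inclusion.} Let $x\in R$ with $\overline{R}\subseteq \frac{1}{x}R$.
\begin{itemize}
\item The expression $\frac{1}{x}R$ only makes sense for $x\neq 0$, so we read the right-hand set as implicitly requiring $x\neq0$.
\item Multiplying the inclusion by $x$ gives $x\overline{R}\subseteq R$, so $x\in (R:\overline{R})_\K=c_R$.
\end{itemize}
Together with $x\neq 0$, this gives $x\in c_R\backslash\{0\}$.

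There is no genuine obstacle here. The only point needing care is the convention that the condition $\overline{R}\subseteq\frac1x R$ presupposes $x\neq0$. This is exactly why $0$ is removed on the left, even though $0$ does belong to $c_R$.

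Interpreted this way, the lemma says that the nonzero elements of the conductor are exactly the elements of $R$ that serve as common denominators for $\overline{R}$ over $R$.
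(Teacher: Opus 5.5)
Your proof is correct and matches the paper's argument, which obtains the lemma directly from the definition $c_R=(R:\overline{R})_\K$ together with the previously noted inclusion $c_R\subseteq R$; this is exactly your double inclusion. Your remark that $\frac{1}{x}R$ presupposes $x\neq 0$ also correctly explains why $0$ is removed on the left-hand side.
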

    If $\rho$ is a prime ideal in $R$ and $c_R\not\subseteq \rho$ then $\exists y\in R\backslash \rho$ and $y\in c_R.$ Thus, 
    \[
    \overline{R}\subseteq \frac{1}{y}R\subseteq (R\backslash \rho)^{-1}R \implies R_\rho =(R\backslash \rho)^{-1}R= (R\backslash\rho)^{-1}\overline{R}.
\] 
    This by \cref{localizationofintegralclosure} tells us $R_\rho$ is integrally closed.
On the other hand, if $R_\rho$ is integrally closed then, since $R_\rho= (R\backslash \rho)^{-1}R,$ we can apply \cref{localizationofintegralclosure} to get: 
    \[
    (R\backslash \rho)^{-1}R=\overline{(R\backslash \rho)^{-1}R}=(R\backslash \rho)^{-1}\overline{R} \supset \overline{R}.
\]
    If $\overline{R}$ is finitely generated module over $R,$ say $\overline{R}=R\ncp{x_1,x_2,\cdots, x_k},$ we may write $x_i=r_i/s_i$ where $s_i\in (R\backslash \rho)^{-1}.$ Let $s=\prod_{i=1}^k s_i.$ Then, clearly we have $s\overline{R}\subseteq R$ and thus by definitional verification, $s \in c_R \cap (R\backslash\rho)$.
Thus, $c_R\not\subseteq \rho.$ 
    
    Now if $Frac(R)$ is a number-field, then we know that (\cref{R/I-finite}) $\overline{R}/R\simeq (\overline{R}/c_R)/(R/c_R)$ is finite.
Compiling together we get the following proposition.
    \begin{proposition}\label{conductor}
    If $R$ is an integral domain, then 
    \begin{itemize}
        \item $c_R$ is an ideal of $R$ as well as $\overline{R}.$
        \item $c_R$ is the largest $\overline{R}$ ideal which is completely inside $R$;
$c_R$ is maximal among $\overline{R}$ ideals $I$ satisfying $I\cap R=I.$
        \item For any $\rho\in M(R),$ $R_\rho$ is integrally closed if $c_R\not\subseteq \rho.$ If $\overline{R}$ is a finitely generated module over $R,$ then $R_\rho$ is integrally closed if and only if $c_R\in \rho.$ If $Frac(R)$ is a number-field, then for any $\rho\in M(R),$ $R_\rho$ is a DVR if and only if $c_R\subseteq \rho.$
    \end{itemize}
    \end{proposition}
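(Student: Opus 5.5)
We read the third item of the statement as it is intended in the preceding discussion. For $\rho\in M(R)$, $R_\rho$ is integrally closed whenever $c_R\not\subseteq\rho$. When $\overline{R}$ is a finitely generated $R$-module, $R_\rho$ is integrally closed if and only if $c_R\not\subseteq\rho$. When $Frac(R)$ is a number-field, $R_\rho$ is a DVR if and only if $c_R\not\subseteq\rho$. The printed ``$c_R\in\rho$'' and ``$c_R\subseteq\rho$'' look like slips, since $c_R\subseteq\rho$ exactly detects the \emph{non}-DVR primes. The plan follows the discussion before the statement and fills in the two points it leaves implicit: finite generation of $\overline{R}$, and the passage from integrally closed to DVR.

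\textbf{First two items.} These are definitional.
\begin{itemize}
\item We have $c_R\subseteq R$, because $x\overline{R}\subseteq R$ and $1\in\overline{R}$ give $x\in R$.
\item If $x\in c_R$ and $y\in\overline{R}$, then $xy\overline{R}\subseteq x\overline{R}\subseteq R$, so $c_R$ is an $\overline{R}$-ideal. Since it lies inside $R$, it is also an $R$-ideal.
\item If $I$ is an $\overline{R}$-ideal with $I\subseteq R$, then $I\overline{R}=I\subseteq R$, so $I\subseteq c_R$. This gives maximality.
\end{itemize}

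\textbf{Third item, general directions.} Suppose $y\in c_R\setminus\rho$. Then $\overline{R}\subseteq \frac1y R\subseteq R_\rho$, so $R_\rho=(R\setminus\rho)^{-1}\overline{R}$. By \cref{localizationofintegralclosure} this equals $\overline{R_\rho}$, so $R_\rho$ is integrally closed.

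Conversely, suppose $\overline{R}=R\ncp{x_1,\dots,x_k}$ and $R_\rho$ is integrally closed. By \cref{localizationofintegralclosure}, $R_\rho=(R\setminus\rho)^{-1}\overline{R}$, which contains every $x_i$. Write $x_i=r_i/s_i$ with $r_i\in R$ and $s_i\in R\setminus\rho$, and set $s=\prod_i s_i$. Then $s\in R\setminus\rho$ because $\rho$ is prime, and $sx_i\in R$ for every $i$, so $s\overline{R}\subseteq R$. Hence $s\in c_R\setminus\rho$.

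\textbf{Number-field case.} Two further inputs are needed.
\begin{enumerate}
\item \emph{$\overline{R}$ is a finitely generated $R$-module.} By \cref{ROK}, $\overline{R}=R\cdot\OK$. Since $\OK$ is a finitely generated $\Z$-module and $\Z\subseteq R$, $R\cdot\OK$ is generated over $R$ by a $\Z$-basis of $\OK$. As a by-product, $c_R\neq0$: $R$ spans $\K$ over $\Q$, so it contains a full-rank lattice $L$. The quotient $\OK/L$ is finite, and its exponent $d$ satisfies $d\,\OK\subseteq L\subseteq R$. Hence $d\overline{R}=d\,R\cdot\OK\subseteq R$, so $0\neq d\in c_R$.
\item \emph{Integrally closed equals DVR for these localizations.} By the Proposition in \cref{R/I-finite}, $R$ is Noetherian of Krull dimension one. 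So each $R_\rho$ with $\rho\in M(R)$ is a one-dimensional Noetherian local domain. Such a ring is a DVR exactly when it is integrally closed, by the standard characterization.
\end{enumerate}
Combining these with the previous paragraph gives: $R_\rho$ is a DVR $\iff$ $R_\rho$ is integrally closed $\iff$ $c_R\not\subseteq\rho$.

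The only step needing care is the construction of the lattice $L$ and the integer $d$ in point 1. This uses that $Frac(R)=\K$ forces $R$ to contain a $\Q$-basis of $\K$, and then clears denominators of a basis of $\OK$ against it. Everything else is bookkeeping with localizations.
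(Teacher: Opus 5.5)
Your proof is correct and is essentially the paper's own argument, namely the discussion preceding the proposition. Your reading of the third item, with $c_R\not\subseteq\rho$ in place of the misprinted $c_R\in\rho$ and $c_R\subseteq\rho$, is the intended one, as its later use in the proof of Separation Lemma 1 confirms.

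The only differences are small. You get finite generation of $\overline{R}$ directly from $\overline{R}=R\cdot\OK$, whereas the paper asserts that $\overline{R}/R\simeq(\overline{R}/c_R)/(R/c_R)$ is finite, which tacitly requires $c_R\neq 0$. You also state explicitly the standard fact that a one-dimensional Noetherian local domain is a DVR exactly when it is integrally closed, which the paper leaves implicit.

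Your by-product $c_R\neq 0$ is not needed. If you keep it, choose the lattice $L$ inside $R\cap\OK$: $R$ need not lie in $\OK$, and otherwise $\OK/L$ is not defined.
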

\end{discussion}
\begin{theorem}\label{conductorlocalization}
    If $S$ is a multiplicative subset of an integral domain $R,$ where $Frac(R)$ is a number-field (or if $\overline{R}$ is 
a finitely generated module over $R$), then
    \[
    S^{-1}c_R=c_{S^{-1}R}
    \]
\end{theorem}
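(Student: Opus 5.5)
We prove the two inclusions separately, after first recording that localization commutes with integral closure: by \cref{localizationofintegralclosure}, $\overline{S^{-1}R}=S^{-1}\overline{R}$. Thus $c_{S^{-1}R}=(S^{-1}R:S^{-1}\overline{R})_\K$, and the statement becomes the claim that the colon ideal $(R:\overline{R})_\K$ commutes with localization at $S$.

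For $S^{-1}c_R\subseteq c_{S^{-1}R}$, take $x/s$ with $x\in c_R$ and $s\in S$. For any $y/t\in S^{-1}\overline{R}$ we have $(x/s)(y/t)=xy/(st)$. Since $xy\in R$, this lies in $S^{-1}R$. This direction needs no finiteness hypothesis.

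For the reverse inclusion we use finite generation. When $Frac(R)$ is a number-field, $\overline{R}$ is a finitely generated $R$-module. One way to see this is that $\overline{R}/c_R$ is finite by \cref{R/I-finite}, provided $c_R\neq 0$. Alternatively, $R$ is Noetherian, and one can argue via \cref{ROK} that $\overline{R}=R\cdot\OK$, with $\OK$ a finitely generated $\Z$-module, so $\overline{R}$ is generated over $R$ by a $\Z$-basis of $\OK$. So write $\overline{R}=R\langle x_1,\dots,x_k\rangle$. Now let $z\in c_{S^{-1}R}$. Then $zx_i\in S^{-1}R$ for each $i$, and also $z=z\cdot 1\in S^{-1}R$. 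Choose a common denominator $s\in S$ with $sz\in R$ and $szx_i\in R$ for all $i$. Then $sz\overline{R}=\sum_i R\,szx_i\subseteq R$, so $sz\in c_R$, and hence $z=(sz)/s\in S^{-1}c_R$.

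The main point requiring care is justifying that $\overline{R}$ is finitely generated over $R$ in the number-field case. I would use \cref{ROK} ($\overline{R}=R\cdot\OK$) together with $\OK$ being a free $\Z$-module of finite rank. This avoids needing $c_R\neq 0$ a priori. The rest is the routine common-denominator bookkeeping above.
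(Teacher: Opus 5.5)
Your proof is correct and follows the paper's argument: the easy inclusion $S^{-1}c_R\subseteq c_{S^{-1}R}$, then for $z\in c_{S^{-1}R}$ a common-denominator argument over generators $\overline{R}=R\langle x_1,\dots,x_k\rangle$, using $\overline{S^{-1}R}=S^{-1}\overline{R}$, to get $sz\in c_R$. Your justification that $\overline{R}$ is finitely generated in the number-field case, via $\overline{R}=R\cdot\OK$ and a $\Z$-basis of $\OK$, is a genuine addition: the paper only asserts finite generation ``by assumption'', and its earlier finiteness remark tacitly presupposes $c_R\neq 0$.
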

\begin{proof}It is obvious that $S^{-1}c_R\subseteq c_{S^{-1}R}.$ 

To show the converse, we note that $\overline{R}$ is finitely generated $R$-module (by assumption in theorem
).
Let $\overline{R}=R\ncp{x_1,x_2,\cdots, x_k}.$

Since $\overline{S^{-1}R}=S^{-1}\overline{R}=S^{-1}R\ncp{x_1,\cdots ,x_k},$ these $x_i$ also span $\overline{S^{-1}R}$ over $S^{-1}R$ (See \cref{localizationofintegralclosure}).
Thus, $\tau\in c_{S^{-1}R}$ tells us $\tau \overline{S^{-1}R} \subseteq S^{-1}R$ which in turn implies that for any choice of $i$ between $1$ and $k,$ we get $\tau x_i\in S^{-1}R$

We write $\tau \cdot x_i =\frac{r_i}{s_i}$ where $s_i\in S \textit{ and } r_i\in R.$ Let $s=\prod_{i=1}^ks_i.$ Then $s\cdot \tau \cdot x_i =r_i \cdot (\prod_{j\neq i} s_j) \in R.$ Thus, $s\tau\overline{R}=s\tau R\ncp{x_1,\cdots,x_k}\subseteq R$ or in other words, $s\tau\in c_R,$ which implies $ \tau\in S^{-1}c_R.$
\end{proof}
\begin{theorem}\label{conductorOK} If $R$ denotes a sub-ring of a number-field $\K$ with $Frac(R)=\K,$ then the following are true.
\begin{itemize}
    \item $c_{R\cap\OK}=c_R\cap \OK.$
    \item $c_{R\cap \OK}\cdot R =c_R.$
    \item $\sfrac{R}{c_R}\simeq \sfrac{R\cap\OK}{c_{R\cap\OK}}$
    \item $\sfrac{\overline{R}}{R} \simeq \sfrac{\OK}{R\cap \OK}$
\end{itemize}
\end{theorem}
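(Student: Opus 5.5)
Throughout, write $\O:=R\cap\OK$. I first record what the earlier results give about $\O$ and $R$. By \cref{ROK}, $\overline{R}=R\cdot\OK$, and this is a Dedekind domain containing $\OK$. Also $\O$ is an order: $\Z\subseteq\O$, and for any $x\in R$ some integer $n\ge 1$ has $nx\in\OK\cap R$, so $Frac(\O)=\K$ and $\overline{\O}=\OK$. By \cref{conductor} and \cref{R/I-finite}, $c_R$ is a nonzero ideal of the Dedekind domain $\overline{R}$, since $\overline{R}/R$ is finite. The plan is to prove the four items in order, using only the defining property of the conductor and \cref{Idealupdown}, applied with $S=\overline{R}$ and $I=c_R$.

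For the first item I prove two inclusions. Let $x\in c_{\O}$, so $x\OK\subseteq\O\subseteq R$. Then for $r\in R$ and $a\in\OK$ we get $x(ra)=r(xa)\in R$. Hence $x\overline{R}=x\,R\cdot\OK\subseteq R$, and so $x\in c_R\cap\OK$. Conversely, $c_R\cap\OK$ is an $\OK$-ideal, because $c_R$ is an $\overline{R}$-ideal and $\OK\subseteq\overline{R}$. It is also contained in $R\cap\OK=\O$. By the maximality statement in \cref{conductor} (applied to $\O$, whose integral closure is $\OK$), it lies in $c_{\O}$.

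For the second item, the inclusion $c_{\O}\cdot R\subseteq c_R$ holds because $c_{\O}\subseteq c_R$ and $c_R$ is an $R$-ideal. For the reverse inclusion, \cref{Idealupdown} gives $c_R=(c_R\cap\OK)\cdot\overline{R}=c_{\O}\cdot R\cdot\OK$. Since $c_{\O}$ is an $\OK$-ideal, $c_{\O}\cdot\OK=c_{\O}$, and therefore $c_R=c_{\O}\cdot R$.

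The key step, and the only real obstacle, is the decomposition $R=\O+c_R$. \cref{Idealupdown} gives the isomorphism $\OK/(c_R\cap\OK)\simeq\overline{R}/c_R$ induced by inclusion, so $\overline{R}=\OK+c_R$. Now take $r\in R$ and write $r=a+c$ with $a\in\OK$ and $c\in c_R\subseteq R$. Then $a=r-c\in R\cap\OK=\O$. Given this, the third item follows from the natural map $\O\to R/c_R$: it is surjective by the decomposition, and its kernel is $\O\cap c_R=c_{\O}$ by the first item. For the fourth item, consider the map $\OK\to\overline{R}/R$. It is surjective because $\overline{R}=\OK+c_R\subseteq\OK+R$, and its kernel is $\OK\cap R=\O$. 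The one point to double-check is that \cref{Idealupdown} really yields the isomorphism induced by inclusion, and not merely an equality of cardinalities. If it only gives cardinalities, I would instead argue locally. At each prime of $\overline{R}$ the localizations of $\OK$ and $\overline{R}$ coincide. Hence $\OK/(c_R\cap\OK)\to\overline{R}/c_R$ is an isomorphism after localizing at every maximal ideal, and by the Chinese remainder theorem it is therefore an isomorphism globally.
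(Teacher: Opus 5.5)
Your proposal is correct and follows essentially the same route as the paper. The steps match: the same two inclusions for the first item; $c_R=(c_R\cap\OK)\cdot\overline{R}$ from \cref{Idealupdown} together with \cref{ROK} for the second; and, for the last two items, the decomposition $R=(R\cap\OK)+c_R$ read off from the natural isomorphism $\OK/(c_R\cap\OK)\simeq\overline{R}/c_R$. Your extra care in checking that this isomorphism is the one induced by inclusion, and that $c_R\neq 0$, covers points the paper leaves implicit. The nonvanishing also follows at once from your first inclusion, since $0\neq[\OK:R\cap\OK]\in c_{R\cap\OK}\subseteq c_R$, so you do not need to rely on the paper's finiteness remark for $\overline{R}/R$.
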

\begin{proof}We prove the statements in the order given.
\begin{itemize} 
    \item  If $x\in c_R\cap \OK,$ then $x\overline{R}\subseteq R$ and $x\in \OK.$ Since, $x\OK \subseteq \OK$ it follows that 
    \[
        x\OK=x(\overline{R}\cap \OK)=x \overline{R} \cap x\OK\subseteq R\cap \OK \implies x\in c_{R\cap \OK}.
\]
    On the other hand, if $x\in c_{R\cap \OK}$ then 
    \[
    x\OK \subseteq R\cap \OK \implies x\OK \cdot R\subseteq (R\cap \OK)\cdot R \subseteq R.
    \]
    Now \cref{ROK} tells us, $R\cdot \OK=\overline{R}$.
It follows that $x\in c_{R}$. Since, $x\in c_{R\cap \OK}\subseteq \OK$ it follows that $x\in c_R\cap \OK$ showing $c_{R\cap\OK}\subseteq c_R\cap \OK$
    \item Since $c_{R\cap \OK} = c_R \cap \OK,$ we can multiply by $R$ to get $c_{R\cap \OK}R = (c_R \cap \OK)\cdot R\subseteq c_R.$  Since $c_{R\cap \OK}\cdot \OK = c_{R\cap\OK},$ applying \cref{ROK} we get $c_{R\cap \OK}\cdot R=(c_{R\cap \OK}\cdot\OK) \cdot R=c_{R\cap \OK}\cdot\overline{R}.$
    Since $c_R$ is an $\overline{R}$-ideal, \cref{Idealupdown} tells us 
    $c_{R\cap \OK}\cdot R=c_{R\cap \OK}\cdot\OK \cdot R=c_{R\cap \OK}\cdot\overline{R}=c_R.$
    \item Since $c_{R\cap \OK}= c_R\cap \OK,$ we can apply \cref{Idealupdown} to 
get the natural isomorphism
    $\sfrac{\OK}{c_{R\cap\OK}}\simeq \sfrac{\overline{R}}{c_R}$ where $m+c_{R\cap\OK}\mapsto m+c_R$.
Clearly, this map sends elements of $R\cap \OK/c_{R\cap \OK}$ to the elements of $R/c_R.$ To show surjection, we note that for any choice of $x\in \overline{R},$ our natural isomorphism gives us  and element, $m\in \OK$ such that $m-x\in c_R$.
Thus, when $x\in R \subseteq \overline{R},$ we have corresponding $m\in \OK,$ such that $m-x\in c_R$ which is equivalent to $m\in x+c_R\subseteq R,$ and thus $m\in R\cap \OK.$ It follows that the classes corresponding to $R/c_R$ in $\overline{R}/c_R$ naturally correspond to classes in $R\cap \OK/c_{R\cap\OK}$ in $\OK/c_{R\cap \OK}.$
    \item Follows directly from previous part as 
    \[
    (\sfrac{\overline{R}}{R})\simeq \sfrac{(\overline{R}/c_R)}{(\sfrac{R}{c_R})}) \textit{ and } \sfrac{\OK}{R\cap \OK}\simeq \sfrac{(\OK/c_{R\cap \OK})}{(R\cap \OK /c_{R\cap\OK})},
    \] 
    which in turn is a direct implication of $c_R\subseteq R\subseteq \overline{R}$ for any integral domain $R$ (see \cref{conductor}).
\end{itemize}
\end{proof}
\begin{definition}
    We say an integral domain $R$ whose field of fractions is a number-field, is irreducible if $|S(R)|=1.$
    
    In other words, $R$ is irreducible if and only if $\exists!
\rho$, a maximal ideal in $R$ such that $\forall \rho'\neq \rho$ in $M(R),$ we have 
    $R_{\rho'}$ is a DVR.
We call $R$ {\em an irreducible order}, when $R$ is an order that is irreducible.
\end{definition}
\begin{lemma}\label{FirstDecomp}
    Given any $R,$ a sub-ring of $\K$ with $Frac(R)=\K,$ we have 
    \begin{equation}
        R= \cap_{\rho \in S(R)} (R_\rho \cap \overline{R}).
\end{equation}
\end{lemma}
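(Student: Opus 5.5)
The inclusion $R\subseteq \cap_{\rho\in S(R)}(R_\rho\cap\overline{R})$ is immediate, since $R\subseteq R_\rho$ for every prime $\rho$ and $R\subseteq\overline{R}$. The content of the lemma is the reverse inclusion. I would get it by comparing with the decomposition $R=\cap_{\rho\in M(R)}R_\rho$ of \cref{localintersection}.

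Take $x\in\overline{R}$ with $x\in R_\rho$ for every $\rho\in S(R)$. By \cref{localintersection} it is enough to show that $x\in R_{\rho'}$ for every $\rho'\in M(R)\setminus S(R)$.

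Fix such a $\rho'$. By definition of $S(R)$, the ring $R_{\rho'}$ is a DVR, hence integrally closed. \cref{localizationofintegralclosure}, applied to the multiplicative set $R\setminus\rho'$, then gives
\[
R_{\rho'}=\overline{(R\setminus\rho')^{-1}R}=(R\setminus\rho')^{-1}\overline{R}\supseteq \overline{R}.
\]
This is the same computation already carried out in the discussion preceding \cref{conductor}. Hence $x\in\overline{R}\subseteq R_{\rho'}$. Combining this with the hypothesis on the primes in $S(R)$, we get $x\in\cap_{\rho\in M(R)}R_\rho=R$, which is the reverse inclusion.

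The degenerate case $S(R)=\emptyset$ needs a convention. Here the intersection over the empty index set should be read as $\overline{R}$, which is the ambient ring, since every factor $R_\rho\cap\overline{R}$ lies inside $\overline{R}$. With this reading the argument above shows that every $R_{\rho'}$ contains $\overline{R}$, so $\overline{R}\subseteq\cap_{\rho'\in M(R)}R_{\rho'}=R$, and therefore $R=\overline{R}$, as the lemma requires. Since $\mathrm{Frac}(R)=\K$, \cref{localintersection} applies to $R$ without further hypotheses.

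The only step with real content is the containment $\overline{R}\subseteq R_{\rho'}$ whenever $R_{\rho'}$ is a DVR, and it follows directly from \cref{localizationofintegralclosure}. I expect no serious obstacle; the main thing to state carefully is the empty-intersection convention.
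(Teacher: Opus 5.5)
Your proof is correct and takes essentially the paper's route. Both arguments combine \cref{localintersection} for $R$ with the fact that, for $\rho\notin S(R)$, the DVR $R_\rho$ equals $(R\setminus\rho)^{-1}\overline{R}$ and so contains $\overline{R}$. The paper phrases that step via $\overline{R}_\rho=\cap_{v\in\overline{\rho}}\overline{R}_v$ and \cref{localintersection} applied to $\overline{R}$; your direct containment is slightly more economical, and your handling of $S(R)=\emptyset$ makes explicit a convention the paper leaves implicit.
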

\begin{proof}
    Follows directly from 
    \[
    R_\rho\subseteq \overline{R}_\rho=\cap_{v\in \overline{\rho}}\overline{R}_v,
    \]
    and \cref{localintersection} applied to $R$ and $\overline{R}.$
\end{proof}

\begin{lemma}[Separation Lemma 1]\label{SEPARATION LEMMA}
    If $\O$ is an order such that its field of fractions is the number-field $\K$ and suppose that $\O=\cap_{\rho \in S(\O)} (\O_\rho \cap \OK)$ 
    from \cref{FirstDecomp}.
Let $c_\rho$ denote the conductor of $\O_\rho \cap \OK.$
    
    Then, we have
    \begin{itemize}
        \item  
            \[
            [\OK:\O]=\prod_{\rho \in S(\O)}[\OK:(\O_\rho \cap \OK)].
            \]
        \item If $\rho,\rho'\in S(\O)$ are distinct, then
            \[
            c_\rho+c_{\rho'}=\OK
            \]
            and
            \[
            c_{R}=\prod_{\rho\in S(\O)} c_{\rho}
            \]
    \end{itemize}
\end{lemma}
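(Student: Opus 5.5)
The plan is to reduce both claims to a statement about finite quotients of $\OK/c_\O$ and then apply the Chinese Remainder Theorem. Write $\O_i:=\O_{\rho_i}\cap\OK$ for $S(\O)=\{\rho_1,\dots,\rho_n\}$.

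The first step is to compute the conductor $c_{\rho}$ of $\O_\rho\cap\OK$ locally. By \cref{localintersection} and \cref{conductorlocalization}, the conductor of an order is determined by its localizations at maximal ideals. For $v\in M(\OK)$ we localize $\O_\rho\cap\OK$ at $v\cap(\O_\rho\cap\OK)$ and compare with $\O_\rho$ and $\OK$.
\begin{itemize}
\item If $v\in\overline{\rho}$, the localization is $\O_\rho$ itself, because $\O_\rho\subseteq(\OK)_v$ for $v\in\overline{\rho}$.
\item If $v\notin\overline{\rho}$, the localization is $(\OK)_v$.
\end{itemize}
Applying \cref{conductorOK} to $R=\O_\rho$ gives $c_\rho=c_{\O_\rho}\cap\OK$. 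Hence $c_\rho$ is supported only at primes $v\in\overline{\rho}$, and $c_\rho\OK_v=c_\O\OK_v$ there, using $c_{\O_\rho}=(c_\O)_\rho$ from \cref{conductorlocalization}. The same local comparison shows $c_\O=\prod_\rho c_\rho$: both sides are $\OK$-ideals with the same $v$-adic valuations. At $v\notin\bigcup_{\rho\in S(\O)}\overline{\rho}$ the valuation is $0$, since $c_\O\not\subseteq\rho$ by \cref{conductor}. Coprimality $c_\rho+c_{\rho'}=\OK$ follows because the sets $\overline{\rho}$ partition $M(\OK)$, so distinct $\rho,\rho'$ have disjoint supports.

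For the index, let $\mathfrak f=c_\O$. Both $\O$ and each $\O_i$ contain $\mathfrak f$, since $\mathfrak f\subseteq c_{\rho_i}\subseteq\O_i$. So all indices can be computed inside the finite ring $A:=\OK/\mathfrak f$. By CRT and the coprimality just shown, $A\simeq\prod_i\OK/c_{\rho_i}$. It then suffices to establish two facts.
\begin{enumerate}
\item The image of $\O_i$ in $A$ is $\pi_i^{-1}(\O_i/c_{\rho_i})$, where $\pi_i$ is the $i$-th projection. This holds because $\O_i\supseteq c_{\rho_i}$ and $\O_i$ is the full preimage of its image modulo $c_{\rho_i}$.
\item The image of $\O$ in $A$ is $\prod_i \O_i/c_{\rho_i}$.
\end{enumerate}
Given these, $[\OK:\O]=\prod_i[\OK/c_{\rho_i}:\O_i/c_{\rho_i}]=\prod_i[\OK:\O_i]$.

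The main obstacle is fact (2), and within it the containment $\prod_i\O_i/c_{\rho_i}\subseteq\O/\mathfrak f$. The inclusion $\O/\mathfrak f\subseteq\bigcap_i\pi_i^{-1}(\O_i/c_{\rho_i})$ follows directly from \cref{FirstDecomp}, and that intersection is exactly the product. For the reverse, take a tuple $(x_i)$ with $x_i\in\O_i$. By CRT pick $x\in\OK$ with $x\equiv x_i\pmod{c_{\rho_i}}$ for every $i$. Then $x\in\O_i$ for each $i$, since $x_i\in\O_i$ and $c_{\rho_i}\subseteq\O_i$. Hence $x\in\bigcap_i\O_i=\O$ by \cref{FirstDecomp}. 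This lift shows the image of $\O$ is the whole product. Care is needed because CRT is applied in $\OK$, not in $\O$; this works precisely because each $\O_i$ is a union of cosets of $c_{\rho_i}$.
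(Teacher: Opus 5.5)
Your proof is correct, but it takes a different route from the paper's.

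The paper works inside $\O$. It writes the primary decomposition $c_\O=\prod_{\rho\in S(\O)}\kappa_\rho$ in $\O$ and uses the Chinese Remainder Theorem in $\O$ plus localization to get $\O/c_\O\simeq\prod_\rho\O_\rho/\kappa_\rho\O_\rho$. It then identifies $\kappa_\rho\O_\rho=c_{\O_\rho}$ by \cref{conductorlocalization}, and $\O_\rho/c_{\O_\rho}\simeq(\O_\rho\cap\OK)/c_\rho$ by \cref{conductorOK}. Next it shows $c_\rho=\kappa_\rho\OK$, so coprimality and $c_\O=\prod_\rho c_\rho$ are inherited from the comaximality of the $\kappa_\rho$. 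The index formula comes from counting $|\OK/c_\O|$ against $|\O/c_\O|$; the hypothesis $\O=\bigcap_\rho(\O_\rho\cap\OK)$ is never invoked.

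You instead stay in the Dedekind domain $\OK$. Coprimality and the product formula come from comparing $v$-adic valuations. The index comes from the intersection formula of \cref{FirstDecomp} plus a CRT lift in $\OK$, which exhibits $\O/c_\O$ as the subring $\prod_i\O_i/c_{\rho_i}$ of $\OK/c_\O\simeq\prod_i\OK/c_{\rho_i}$.

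Your approach has three advantages:
\begin{enumerate}
\item It avoids primary decomposition in the non-Dedekind ring $\O$.
\item It gives an equality of subrings rather than an abstract isomorphism.
\item Its index step applies verbatim to any intersection of orders with pairwise coprime conductors, which is the setting of \cref{SEPARATION LEMMA 2}.
\end{enumerate}
The paper's route, in exchange, identifies $c_\rho$ as the extension $\kappa_\rho\OK$ of the $\rho$-primary component of $c_\O$. It also describes $\O/c_\O$ through the local rings $\O_\rho$, which is the viewpoint Section 4 builds on.

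One small remark: your second bullet (that the localization at $v\notin\overline{\rho}$ is $(\OK)_v$) is asserted without proof. You do not need it, since $c_\rho=c_{\O_\rho}\cap\OK$, \cref{Idealupdown}, and $M(\overline{\O_\rho})=\overline{\rho}$ already show that $c_\rho$ is supported on $\overline{\rho}$.
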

\begin{proof}
If $\rho,\rho'\in M(\O)$ are distinct, we have $\rho+\rho'=\O.$ This implies that, for any primary ideals $\partial, \partial'$ satisfying $\sqrt{\partial}=\rho$ and $\sqrt{\partial'}=\rho',$ we have $\partial + \partial'=\O.$ In other words, we have
\[
\sfrac{\O}{\partial \partial'}\simeq \sfrac{\O}{\partial}\times \sfrac{\O}{\partial'}.
\]
Note that \cref{conductor} tells us that the prime ideals of $\O$ containing $c_{\O}$ are exactly the ideals in $S(\O).$ Thus, the primary decomposition of $c_{\O}$ looks like 
\[
c_{\O}=\prod_{\rho\in S(\O)}\kappa_\rho
\] 
where the radical ideal of $\kappa_\rho$ (in $\O$) is $\rho.$

It follows that 
\[
\sfrac{\O}{c_{\O}} \simeq \prod_{\rho\in S(\O)} \sfrac{\O}{\kappa_\rho}\simeq \prod_{\rho\in S(\O)} \sfrac{\O_\rho}{\kappa_\rho\O_\rho}.
\]

Let $J_\rho$ denote the set $\O\backslash\rho.$ If $\rho'\neq \rho$ then $\kappa_{\rho'}$ has non trivial intersection with $J_\rho$ as $\kappa_{\rho'}+\rho =\O.$  Thus, $J_\rho^{-1}\kappa_{\rho'}=J_\rho^{-1}\O=\O_\rho.$

Thus, 
\[
J_\rho^{-1}c_{\O}=J_\rho^{-1}\prod_{\rho'\in S(\O)}\kappa_{\rho'}=J_{\rho}^{-1}\kappa_\rho=\kappa_\rho\O_\rho.
\]
But now, applying \cref{conductorlocalization} we see that $J_\rho^{-1}c_{\O}=c_{\O_\rho}$ and thus using \cref{conductorOK} we see that $c_{\O_\rho}=\kappa_\rho\O_\rho$.
Again applying \cref{conductorOK} we see that $\sfrac{\O_\rho}{c_{\O_\rho}}\simeq \sfrac{(\O_\rho\cap\OK)}{(c_{\O_\rho\cap\OK})}\simeq\sfrac{(\O_\rho\cap\OK)}{c_\rho}$.

Thus we may write

\begin{equation}\label{**}
    \sfrac{\O}{c_{\O}}\simeq\prod_{\rho\in S(\O)} \sfrac{(\O_\rho\cap \OK)}{c_\rho}.
\end{equation}

Since, $\kappa_\rho\subseteq c_\rho$ and if $\rho\neq \rho'$ then $1\in \kappa_\rho+\kappa_{\rho'}\subseteq c_{\rho}+c_{\rho'}.$ Since, $c_\rho$ and $c_\rho'$ are ideals in $\OK,$ it follows that 
\[
c_\rho+c_\rho'=\OK.
\]
Furthermore, we clearly have $\kappa_\rho\OK \subseteq c_\rho$ and our previous discussion gives $\kappa_\rho \O_\rho=c_{\O_\rho}$ Thus, appealing to \cref{ROK} , it follows that 
\[
c_{\O_\rho}=\kappa_\rho\O_\rho\OK=(\kappa_\rho\OK) \overline{\O_\rho}
\]
Now appealing to \cref{Idealupdown}, we get 

\[
c_\rho=c_{\O_\rho}\cap\OK=((\kappa_\rho\OK) \overline{\O}_\rho) \cap \OK=\kappa_\rho\OK.
\]
It follows that 
\[
c_{\O}=\prod_{\rho\in S(\O)}\kappa_\rho\OK=\prod_{\rho\in S(\O)}c_\rho.
\] 
Thus, we also get 
\begin{equation}\label{*}
    \sfrac{\OK}{c_{\O}}\simeq \prod_{\rho\in S(\O)} \sfrac{ \OK}{c_\rho}.
\end{equation}

Now appealing to $(\sfrac{\overline{R}}{R})\simeq \sfrac{(\overline{R}/c_R)}{(\sfrac{R}{c_R})}$
and using  \cref{*} and \cref{**}, we get that 

\[
            [\OK:\O]=\prod_{\rho \in S(\O)}[\OK:(\O_\rho \cap \OK)].
\]
\end{proof}
\begin{lemma}[Separation Lemma 2]\label{SEPARATION LEMMA 2}
    If $\O=\O_1\cap\O_2$ such that $c_{\O_1}+c_{\O_2}=\OK,$ then $c_{\O}=c_{\O_1}c_{\O_2}$ and
    $S(\O_1)$ and $S(\O_2)$ can be naturally identified with disjoint subsets of $S(\O)$ such that 
    \begin{itemize}
        \item $S(\O_1)\sqcup S(\O_2)=S(\O)$
        \item $\rho\in S(\O_1)\subseteq S(\O)\iff  \O_\rho=(\O_1)_\rho$
        \item $\rho\in S(\O_2)\subseteq S(\O)\iff  \O_\rho=(\O_2)_\rho.$
    \end{itemize}
\end{lemma}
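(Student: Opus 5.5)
Throughout, $\O_1,\O_2$ are orders in $\K$, so $\overline{\O}=\overline{\O_1}=\overline{\O_2}=\OK$ by \cref{ROK}, and all conductors are $\OK$-ideals contained in the respective orders. The plan has three steps: first obtain the conductor factorization, then localize at each prime of $\O$, and finally read off the identification of $S(\O_1)$ and $S(\O_2)$ from \cref{conductor}.

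\textbf{Conductor.} Coprimality gives $c_{\O_1}c_{\O_2}=c_{\O_1}\cap c_{\O_2}$. This ideal is an $\OK$-ideal lying in $\O_1\cap\O_2=\O$, so \cref{conductor} gives $c_{\O_1}c_{\O_2}\subseteq c_\O$. For the reverse inclusion, $\O\subseteq\O_i$ gives $c_\O\OK\subseteq\O\subseteq\O_i$. Hence $c_\O\subseteq c_{\O_i}$ for $i=1,2$, so $c_\O\subseteq c_{\O_1}\cap c_{\O_2}=c_{\O_1}c_{\O_2}$.

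\textbf{Localization.} Fix $\rho\in M(\O)$. Since $c_{\O_1}+c_{\O_2}=\OK\ni 1$, we have $(c_{\O_1}\cap\O)+(c_{\O_2}\cap\O)=\O$, because both summands are already contained in $\O$. So $\rho$ cannot contain both $c_{\O_1}$ and $c_{\O_2}$; say $c_{\O_2}\not\subseteq\rho$, and pick $s\in c_{\O_2}\setminus\rho$. I claim that $\O_\rho=(\O_1)_\rho$, where $(\O_1)_\rho:=(\O\setminus\rho)^{-1}\O_1$. Let $x\in\O_1$. Then $sx\in c_{\O_2}\cap\O_1$, since $c_{\O_2}$ is an $\OK$-ideal; moreover $c_{\O_2}\subseteq\O_2$, so $sx\in\O_1\cap\O_2=\O$. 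Thus $x=sx/s\in\O_\rho$. The same argument shows $\O_\rho=\OK_\rho=(\O_2)_\rho$, which is a localization of a Dedekind domain. Now $(\O_1)_\rho$ is a localization of $\O_1$ at the prime $\rho_1:=\rho\O_\rho\cap\O_1$. Thus $\rho\in S(\O)$ if and only if $\O_\rho$ is not a DVR, if and only if $\rho_1\in S(\O_1)$. We also need that $\rho_1$ is a prime of $\O_1$ and that $(\O_1)_{\rho_1}=\O_\rho$. This holds because $\O_\rho$ is a local ring containing $\O_1$ and dominating it at $\rho_1$. In the other case we use $\O_2$ instead.

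\textbf{Identification.} This gives maps $S(\O)\to S(\O_1)\sqcup S(\O_2)$. I would invert them via contraction: send $\rho_1\in S(\O_1)$ to $\rho:=\rho_1\cap\O$. The key point is to show that $c_{\O_2}\not\subseteq\rho$. Since $\rho_1\supseteq c_{\O_1}$ by \cref{conductor}, and $\rho_1$ cannot also contain the comaximal $\OK$-ideal $c_{\O_2}\cap\O_1=c_{\O_2}\cap\O$, this follows. We then check that $\O_\rho=(\O_1)_{\rho_1}$ using the localization step above. Disjointness holds because a prime $\rho\in S(\O)$ must contain $c_\O=c_{\O_1}c_{\O_2}$, hence exactly one of $c_{\O_1}$ and $c_{\O_2}$. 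The case analysis above then forces the other localization to be a DVR. This yields the stated equivalences.

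The main obstacle I expect is the bookkeeping of this identification. One must check that contraction and localization are mutually inverse bijections between the relevant maximal ideals. This rests on the fact that $\O\subseteq\O_1$ is an isomorphism after inverting any $s\in c_{\O_2}\cap\O$. That fact gives an identification of $\operatorname{Spec}$ away from $V(c_{\O_2})$, and I would state it as an explicit sublemma.
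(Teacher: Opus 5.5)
Your proposal is correct and follows essentially the paper's argument. Both proofs multiply by an element of $c_{\O_2}\cap\O$ lying outside the prime (the paper's $r_2$, with $r_1+r_2=1$ and $r_i\in c_{\O_i}$) to identify $\O_\rho$ with the corresponding localization of $\O_1$, use contraction for the injection $S(\O_1)\to S(\O)$, and use $r_1r_2\in c_{\O}\subseteq\rho$ for disjointness and covering. Your explicit inverse $\rho\mapsto\rho\O_\rho\cap\O_1$ also handles the covering direction more cleanly than the paper's final paragraph.

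There are two slips to repair. First, the ``same argument'' gives $(\O\setminus\rho)^{-1}\O_2=(\O\setminus\rho)^{-1}\OK$, which is a semilocal Dedekind domain and need not be a DVR; it does not give $\O_\rho=(\OK)_\rho$, which would make every $\O_\rho$ integrally closed. Second, $s$ must be chosen in $(c_{\O_2}\cap\O)\setminus\rho$. This is possible because $r_i=1-r_{3-i}\in\O_1\cap\O_2=\O$, and that fact, not the tautology $c_{\O_i}\cap\O\subseteq\O$, is why $(c_{\O_1}\cap\O)+(c_{\O_2}\cap\O)=\O$.
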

\begin{proof}
    Note that $c_{\O}=c_{\O_1} \cap c_{\O_2}$ follows directly from \cref{conductor}.
As $c_{\O_1}+c_{\O_2}=\OK,$ it follows that $c_{\O}=c_{\O_1} \cap c_{\O_2}=c_{\O_1}c_{\O_2}.$
    
    Since, $c_{\O_1}+c_{\O_2}=\OK$, we can find $r_1\in c_{\O_1}$ and $r_2\in c_{\O_2},$ such that $r_1+r_2=1$.
Fix a prime ideal $\rho \in S(\O_1)$ (that is, $c_{\O_1}\subseteq \rho)$.
Let $\tau=\rho\cap \O.$ 
    
    Clearly, $\tau$ is a prime ideal in $\O$ and $\O_\tau\subseteq (\O_1)_\rho.$
    
    If $s\in (\O_1)_\rho,$ then $s=q/t$ for some $q\in \O_1$ and $t\in \O_1\backslash \rho $.
We simply note that since $tr_2\in c_{\O_2}\subseteq \O_2$ and $tr_2=t-tr_1\in \O_1$ we get $tr_2\in \O.$ We further note that $tr_2 \notin \tau$ ($tr_2\in \tau\subseteq \rho$ would imply $r_2\in \rho$ and since $r_1\in c_{\O_1}$ we would get $1=r_1+r_2\in \rho.$ Similarly, $qr_2\in c_{\O_2}\subseteq \O_2$ and $qr_2=q-qr_1\in \O_1$ we get $qr_2\in \O.$ 

    Thus, we get $s=q/t=(qr_2)/(tr_2)\in \O_\tau,$ implying 
    \[
    \rho \in S(\O_1) \Rightarrow (\O_1)_\rho=\O_{\rho\cap\O}. 
    \]
    If $\rho,\rho'\in S(\O_1)$ such that $\rho\cap \O =\rho'\cap\O,$ then $(\O_1)_\rho=(\O_1)_{\rho'}$ and thus $\rho=\rho'.$

    Thus, the canonical map 
$S(\O_1)\longrightarrow S(\O)$ given by $\rho\longrightarrow \rho\cap \O$ is injective. Similarly we can identify $S(\O_2)$ with a subset of $S(\O).$ To show that $S(\O_1)$ and $S(\O_2)$ are disjoint, we only need to note that if some $\rho\in S(\O)$ lies in both $S(\O_1)$ and $S(\O_2),$ we would immediately have $r_1\in c_{\O_1} \subseteq \rho$  and $r_2\in c_{\O_2} \subseteq \rho.$ Forcing $1= r_1+r_2 \in \rho.$ Contradicting $\rho$ is a prime ideal.
Finally if $\rho\in S(\O)$ is a prime ideal then $r_1r_2\in \rho$ which implies $r_1\in \rho$ or $r_2\in \rho.$ Since $r_1+r_2=1$, exactly one of $r_1,r_2$ is in $\rho.$ If $r_i\in \rho$ then $c_{\O}+r_i\O_i\subseteq\rho\O_i$.
If $t\in c_{\O_i}$ then $t=tr_2 +tr_1\in c_{\O} +r\O_1.$ Thus, $c_{\O_i}\in \rho \O_i$.
It follows that images of the injective maps $S(\O_i)\longrightarrow S(\O)$ are disjoint and cover $S(\O).$
\end{proof}
\begin{discussion}\label{irreducibleprime}
    Note that the above lemma tells us that irreducible orders mimic Euclid's property defining prime numbers, that is
    
    If $\O$ is an irreducible order, and $\O_A, \O_B$ are orders with $c_{\O_A}+c_{\O_B}=\OK$, then 
    \[
    \O_A\cap\O_B \subseteq \O \Rightarrow \O_A\subseteq \O \textbf{ or } \O_B \subseteq \O.
\]
\end{discussion}
\begin{theorem}[Fundamental Theorem of Orders]\label{FTO}
    Every order $\O$ can be written as an intersection of irreducible orders in a unique way such that the conductors of the irreducible orders are pairwise co-prime.
    The index (and conductor) of $\O$ in $\OK$ will be the product of the indices (and conductors) of the irreducible orders in $\OK$ in the given decomposition. This representation is given by 
    \[
    \O=\cap_{\rho\in S(\O)}(\O_\rho\cap \OK)
    \]
    where the localization of the order for a prime in $S(\O)$ matches with the localization of the corresponding irreducible component, namely $(\O_\rho\cap \OK)$.
\end{theorem}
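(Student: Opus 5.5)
The plan splits into existence (with the index and conductor statements) and uniqueness.

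\textbf{Existence.} \Cref{FirstDecomp} gives $\O=\cap_{\rho\in S(\O)}(\O_\rho\cap\OK)$, since $\overline{\O}=\OK$ for an order. \Cref{SEPARATION LEMMA} then gives multiplicativity of the index and $c_\O=\prod_\rho c_\rho$ with $c_\rho+c_{\rho'}=\OK$ for $\rho\neq\rho'$. It remains to check that each $\O^{(\rho)}:=\O_\rho\cap\OK$ is an irreducible order whose localization at the prime over $\rho$ is $\O_\rho$. It is an order: it contains $\O$, lies in $\OK$, and has full rank.

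For the localizations of $\O^{(\rho)}$, let $\tau\in M(\O^{(\rho)})$. I would use \cref{conductor}: $\O^{(\rho)}_\tau$ fails to be a DVR iff $c_\rho\subseteq\tau$. In the proof of \cref{SEPARATION LEMMA} we have $c_\rho=\kappa_\rho\OK$, where $\kappa_\rho$ is $\rho$-primary. Hence any $\tau\supseteq c_\rho$ satisfies $\tau\cap\O\supseteq\kappa_\rho$, so $\tau\cap\O=\rho$. At least one such $\tau$ exists because $c_\rho\neq\OK$: indeed $c_{\O_\rho}=\kappa_\rho\O_\rho$ is proper since $\O_\rho$ is not a DVR. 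Thus the candidates are exactly the primes of $\O^{(\rho)}$ lying over $\rho$.

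To get uniqueness of $\tau$ and $\O^{(\rho)}_\tau=\O_\rho$, I would imitate the argument of \cref{SEPARATION LEMMA 2}. Since $\O\subseteq\O^{(\rho)}\subseteq\O_\rho$, localizing at $\O\setminus\rho$ gives $(\O\setminus\rho)^{-1}\O^{(\rho)}=\O_\rho$, a local ring. Its primes are the $\tau$ with $\tau\cap\O\subseteq\rho$, and there is only one nonzero such $\tau$, namely $\rho\O_\rho\cap\O^{(\rho)}$. Then $\O^{(\rho)}_\tau=\O_\rho$, so $S(\O^{(\rho)})$ is a singleton and $\O^{(\rho)}$ is irreducible, with the stated matching of localizations.

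\textbf{Uniqueness.} Suppose $\O=\cap_{i=1}^n\O_i$ with each $\O_i$ irreducible and the conductors pairwise coprime. Set $\O_{\ge k}:=\cap_{i\ge k}\O_i$. Since $c_{\O_1}$ is coprime to each of $c_{\O_2},\dots,c_{\O_n}$, it is coprime to their product. By induction, \cref{SEPARATION LEMMA 2} shows that this product equals $c_{\O_{\ge 2}}$. Repeated application of \cref{SEPARATION LEMMA 2} then yields $S(\O)=\bigsqcup_i S(\O_i)$, and for the unique $\rho_i\in S(\O_i)$ (viewed in $S(\O)$) we get $\O_{\rho_i}=(\O_i)_{\rho_i}$. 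So $n=|S(\O)|$, and it remains to show $\O_i=\O_{\rho_i}\cap\OK$. Apply \cref{FirstDecomp} to the irreducible order $\O_i$: since $S(\O_i)=\{\rho_i\}$, it gives $\O_i=(\O_i)_{\rho_i}\cap\OK=\O_{\rho_i}\cap\OK$, which proves uniqueness.

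\textbf{Main obstacle.} The delicate point is the identification of primes in \cref{SEPARATION LEMMA 2}, where primes of $\O_i$ must be compared with primes of $\O$ via contraction. In the uniqueness step I need the localization $(\O_i)_{\rho_i}$ to be taken at the prime of $\O_i$, while $\O_{\rho_i}$ is taken at its contraction. I would state this bookkeeping carefully, using the same primary-ideal argument as in the existence step.
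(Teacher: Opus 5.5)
Your argument is correct. It follows the paper's skeleton (Separation Lemma 1 for existence, Separation Lemma 2 for uniqueness), but departs from the paper's proof in two places.

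\textbf{Existence.} The paper only cites Separation Lemma 1. That lemma never checks that each $\O_\rho\cap\OK$ is irreducible with localization $\O_\rho$ at its singular prime. Your argument supplies exactly this verification, which the paper leaves implicit:
\begin{itemize}
\item any non-DVR prime of $\O_\rho\cap\OK$ contains $c_\rho\supseteq\kappa_\rho$, so it lies over $\rho$;
\item $(\O\setminus\rho)^{-1}(\O_\rho\cap\OK)=\O_\rho$ forces a unique prime over $\rho$, whose localization is $\O_\rho$.
\end{itemize}

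\textbf{Uniqueness.} The paper appeals to the Euclid-type property of \cref{irreducibleprime}. Comparing two admissible decompositions, each component of one contains a component of the other. One must then sandwich $\O'_{j'}\subseteq\O_i\subseteq\O'_j$, note $c_{\O'_{j'}}\subseteq c_{\O'_j}$, and use coprimality to force $j=j'$ and hence equality.

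You instead combine the localization clause of Separation Lemma 2 with \cref{FirstDecomp} applied to each irreducible $\O_i$. This gives $\O_i=(\O_i)_{\rho_i}\cap\OK=\O_{\rho_i}\cap\OK$ directly, so any admissible decomposition coincides term by term with the canonical one indexed by $S(\O)$.

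The bookkeeping you flag as the main obstacle is harmless. The identifications in Separation Lemma 2 are by contraction, and contractions along the chain $\O\subseteq\O_{\ge 2}\subseteq\dots\subseteq\O_{\ge i}\subseteq\O_i$ compose, so $(\O_i)_{\rho_i}=\O_{\rho_i\cap\O}$.

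\textbf{What each route buys.} The paper's route foregrounds the analogy between irreducible orders and primes. Yours is more canonical: it yields uniqueness, the count $n=|S(\O)|$, and the matching of localizations in one pass, without the containment sandwich.
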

\begin{proof}
    The existence of such a representation (and its particular form) follows from \cref{SEPARATION LEMMA} and uniqueness follows from \cref{SEPARATION LEMMA 2} and \cref{irreducibleprime}.
\end{proof}

\section{The functions $ef$ and $f$ of an Order.}
One often encounters a certain local behaviour of rings and some corresponding invariants around said construction, for example Dedekind Kummer Theorem and \cite{Corso2005DecompositionOP} tell us that the primary ideal over $p\in M(\Z)$ behaves in a certain way, that is, has a certain norm which can be compared to the norm of the prime ideal and the norm of $p$ in the integral closure of the corresponding localization. Recall the following standard definitions.
\begin{notation}
    For any valuation $v\in M(\K)$, we denote the absolute ramification index and the inertial degree (which clearly should have been called anti-inertial degree) of $v$ by $e_v$ and $f_v,$ respectively. That is, if $v\cap \Z=(p)$ for some prime in $\Z,$ then $e_v$ and $f_v$ are defined by  
    \[
    p\OK=\prod_{p\in v} v^{e_v} \textit{ and } \nm_{\OK}(v)=p^{f_v}.
    \]

    If $\mathbb{J}$ is an intermediate number-field, $\Q\subseteq \mathbb{J} \subseteq \mathbb{K}$, we define the relative ramification index and relative inertial degree as $e_v/e_{v\cap J}$ and $f_v/f_{v\cap J},$ respectively.
We denote these by $e_{v}(\K/\mathbb{J})$ and $f_{v}(\K/\mathbb{J})$, respectively.
\end{notation}
In light of the above set up, we make the following definitions.
\begin{definition}
    For any integral domain $R,$ whose fractional-field is the number-field $\K,$ and any non-zero prime ideal of $\rho\in M(R),$ we define 
    \[
    ef(\rho):=\sum_{v\in \overline{\rho}}e_v \cdot f_v.
    \] 
    Let $\rho\cap \Z=(p)$.
    We then define $f(\rho)$ by $|\O/\rho|=p^{f(\rho)}$.
We define $e(\rho)=\frac{ef(\rho)}{f(\rho)}.$
\end{definition}
\begin{definition}\label{ef-definition}
    If $\O$ is an irreducible order with $S(\O)=\{\rho\},$ we define 
    \[
    ef(\O):=ef(\rho), f(\O)=f(\rho) \textit{ and } e(\O):=e(\rho).
\]
\end{definition}
\begin{discussion}\label{Useofef(rho)}
    Let $R$ denote an integral domain whose fractional field is the number-field $\K.$
    For a prime ideal $\rho\in M(R)$ or any ideal in $\O$ containing a prime $p,$ $ef(\rho)$ clearly computes the $\Z_p$ rank of $\varprojlim \sfrac{R}{\rho^k}$ and hence does not quite care about the local ring or the semi-local ring represented in $\prod_{v\in \overline{\rho}}\O_{\K,v}$, where $\O_{\K,v}$ denotes the local completion of $\overline{R}$ at $v$. We note that since $\varprojlim \sfrac{R}{\rho^k}$ is a sub-module of a free $\Z_p$-module, structure theorem for $\Z_p$ modules tells us that 
    \[
    ef(\rho)=rank_{\Z_p}(\varprojlim \sfrac{R}{\rho^k})=\dim_{\F_p}((\varprojlim \sfrac{R}{\rho^k})/(p))=\dim_{\F_p}(\sfrac{R_\rho}{(pR_\rho)}).
    \]
    Thus, $\nm(pR_\rho)=p^{ef(\rho)}$, or more specifically, if the primary component of $pR$ given by $\q_\rho$ that sits over $\rho\in M(R)$ in the primary decomposition of $pR$ satisfies $\nm(\q_\rho)=p^{ef(\rho)}$.

\end{discussion}
\begin{lemma}\label{Normefrelation}
    If $p$ is a prime over $\Z$, has primary decomposition $\prod_i q_i=\cap_i q_i$ in some integral domain $R$, whose fractional field is the number-field $\K,$ where $\q_i$ is $\rho_i$-primary ideal then $\nm(q_i)=p^{ef(\rho_i)}.$
\end{lemma}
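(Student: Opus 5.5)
We need $\nm(\q_i)=|R/\q_i|=p^{ef(\rho_i)}$. The plan is to localize, pass to the integral closure, and count there.

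\textbf{Step 1: localize.} By the Chinese remainder theorem, as used in the proof of \cref{SEPARATION LEMMA}, we have $R/pR\simeq\prod_i R/\q_i$. Since $\q_i$ is $\rho_i$-primary and $\rho_i$ is maximal (Krull dimension one), $R/\q_i\simeq R_{\rho_i}/\q_iR_{\rho_i}$. Localizing $pR=\cap_j\q_j$ at $\rho_i$ kills every $\q_j$ with $j\neq i$, because each such $\q_j$ meets $R\setminus\rho_i$. Hence $\q_iR_{\rho_i}=pR_{\rho_i}$, and so
\[
\nm(\q_i)=|R_{\rho_i}/pR_{\rho_i}|.
\]

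\textbf{Step 2: compare with the integral closure.} Write $A=R_{\rho_i}$. By \cref{localizationofintegralclosure}, $\overline{A}=(R\setminus\rho_i)^{-1}\overline{R}$. This is a semilocal Dedekind domain whose maximal ideals correspond to $\overline{\rho_i}$, and its localizations are $(\OK)_v$ for $v\in\overline{\rho_i}$. Then
\[
|\overline{A}/p\overline{A}|=\prod_{v\in\overline{\rho_i}}|(\OK)_v/p(\OK)_v|=\prod_{v\in\overline{\rho_i}}p^{e_vf_v}=p^{ef(\rho_i)},
\]
using the factorization $p\OK=\prod v^{e_v}$ and \cref{Idealupdown}.

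\textbf{Step 3: show $|A/pA|=|\overline{A}/p\overline{A}|$.} Both $A$ and $\overline{A}$ are torsion-free, and $\overline{A}/A$ is finite by \cref{conductorOK}, since it is a localization of $\overline{R}/R\simeq\OK/(R\cap\OK)$, which is finite. Consider the exact sequence $0\to A\to\overline{A}\to Q\to0$ with $Q$ finite, and tensor with $\Z/p$. This gives
\[
0\to Q[p]\to A/pA\to\overline{A}/p\overline{A}\to Q/pQ\to0.
\]
The connecting map is well defined here because $\overline{A}$ has no $p$-torsion. For the finite module $Q$, the sizes $|Q[p]|$ and $|Q/pQ|$ agree, since both are the kernel and cokernel of multiplication by $p$ on a finite group. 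The alternating product of orders in the four-term sequence therefore gives $|A/pA|=|\overline{A}/p\overline{A}|$.

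Combining Steps 1–3 gives $\nm(\q_i)=p^{ef(\rho_i)}$. This is the same claim asserted in \cref{Useofef(rho)}, but proved without completions.

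\textbf{Main obstacle.} The delicate step is Step 3: making sure the localized quotient $Q$ is genuinely finite and that the snake-lemma bookkeeping is correct. A secondary point is verifying in Step 2 that the semilocal ring $\overline{A}$ splits as a product over $\overline{\rho_i}$ modulo $p$. I would handle this by the Chinese remainder theorem in $\overline{A}$, together with the identification $\overline{A}/v^{k}\overline{A}\simeq\OK/v^{k}$ from \cref{Idealupdown}.
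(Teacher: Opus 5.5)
Your proof is correct, but it takes a different route from the paper. The paper gives no separate proof; the lemma is read off from the discussion just before it. That discussion passes to the $\rho$-adic completion $\varprojlim R/\rho^k$ and regards it as a $\Z_p$-submodule of the free module $\prod_{v\in\overline{\rho}}\O_{\K,v}$, asserting that its rank is $\sum_v e_vf_v$. It then invokes the structure theorem for finitely generated $\Z_p$-modules: torsion-free implies free, so rank equals $\dim_{\F_p}$ of the reduction mod $p$. Finally it identifies that reduction with $R_\rho/pR_\rho$, and hence with $R/\q_\rho$.

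You instead stay with the localization $A=R_{\rho_i}$. You get $|A/pA|=|\overline{A}/p\overline{A}|$ from the finiteness of $Q=\overline{A}/A$ together with the elementary equality $|Q[p]|=|Q/pQ|$. You then compute $|\overline{A}/p\overline{A}|$ by the Chinese remainder theorem in the semilocal Dedekind domain $\overline{A}$.

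Both arguments rest on the same principle: the mod-$p$ dimension of a torsion-free module of finite rank is unchanged under passage to a finite-index overmodule. The paper says only that $ef(\rho)$ ``clearly'' computes the $\Z_p$-rank of the completion. It therefore takes for granted that the completion has full rank in $\prod_v\O_{\K,v}$, and that its reduction mod $p$ is $R_\rho/pR_\rho$. Your version proves the corresponding facts directly, using only results already in the paper (finiteness of $\overline{R}/R$ via \cref{conductorOK}). It also makes explicit the step $\q_iR_{\rho_i}=pR_{\rho_i}$, which the paper only asserts.

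What the paper's formulation buys is the identification of $ef(\rho)$ as the $\Z_p$-rank of the completion. That is the viewpoint it uses immediately afterwards for the bijection between irreducible orders and local subrings of $\prod_{v\in S}\O_{\K,v}$.
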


\begin{discussion}
    Suppose $\O$ is an irreducible order (with fractional field $\K$) with $S(\O)=\{\rho\}$ such that $\overline{\rho}=\{v_1,v_2,\cdots,v_k\}\subseteq M(\K).$ Let $p$ be a prime number such that $(p)=\rho\cap \Z.$ If the conductor of $\O$ is $c_{\O},$ then we know that 
    \[
    c_{\O}=(v_1)^{a_{v_1}}(v_2)^{a_{v_2}}\cdots (v_k)^{a_{v_k}}
    \]
    for some $a_i\in \Z,$ where $a_i\ge 1$ and $v_i \cap \Z=(p).$ It follows from \cref{Normefrelation} that $ef(\O)=\sum_{i=1}^k e_{v_i}f_{v_{i}}.$

    Furthermore, since $c_{\O}\subseteq \O \subseteq \OK$ we see that $\O$ is completely determined by $\sfrac{\O}{c_{\O}},$ seen as a sub-ring of $\sfrac{\OK}{c_{\O}}.$  This naturally sets up a bijection between local sub-rings of $\prod_{v\in S}\O_{\K,v}$ with irreducible orders $\O\subsetneq \OK$ satisfying $\sqrt{c_{\O}}=\prod_{v\in S}v$ given by 
    \[ 
    \O\longrightarrow \varprojlim \sfrac{\O}{\rho^k}.
    \]
    The inverse of this map is given by intersecting the given local ring in $\prod_{v\in S}\O_{\K,v}$ with $\OK$ as it sits diagonally in $\prod_{v\in S}\O_{\K,v}$. We thus get the following proposition.
\end{discussion}
\begin{proposition}[Bijection:Irreducible orders-Local algebras in p-adic extensions]
    There is a natural bijection between local sub-rings of $\prod_{v\in S}\O_{\K,v}$ and irreducible orders $\O\subsetneq \OK$ satisfying $\sqrt{c_{\O}}=\prod_{v\in S}v$ given by 
    \[ 
    \O\longrightarrow \varprojlim \sfrac{\O}{\rho^k}.
    \]
    The inverse of this map is given by intersecting the given local ring in $\prod_{v\in S}\O_{\K,v}$ with $\OK$ as it sits diagonally in $\prod_{v\in S}\O_{\K,v}$. We thus get the following proposition.
\end{proposition}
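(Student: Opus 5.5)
The plan is to reduce everything to the finite quotient $\sfrac{\OK}{c}$ for a suitable $\OK$-ideal $c$ supported on $S$. The Chinese Remainder Theorem identifies this quotient with $\prod_{v\in S}\sfrac{\O_{\K,v}}{v^{a_v}\O_{\K,v}}$. Both sides of the bijection are then read off from subrings of this finite ring, and we glue over varying $c$.

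I will take "local sub-ring" to mean an open local subring $A\subseteq \prod_{v\in S}\O_{\K,v}$, i.e. one containing $\prod_{v\in S}v^{a_v}\O_{\K,v}$ for some $a_v\ge 1$. Without an openness condition of this kind the inverse map cannot land in orders, so this hypothesis has to be made explicit. I also require $A$ to be a proper subring, so that the matching order is proper.

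\textbf{Forward map.} Let $\O$ be irreducible with $S(\O)=\{\rho\}$ and $\overline{\rho}=S$, and put $c=c_{\O}=\prod_{v\in S}v^{a_v}$.
\begin{enumerate}
\item Since $\rho\OK$ has radical $\prod_{v\in S}v$, the $\rho$-adic and $c$-adic topologies on $\OK$ agree. Hence the $\rho$-adic completion of $\OK$ is $\prod_{v\in S}\O_{\K,v}$.
\item Because $\OK/\O$ is finite (\cref{R/I-finite}), the completion $\varprojlim\sfrac{\O}{\rho^k}$ embeds as a closed subring of that product.
\item This subring is local: it is complete with respect to $\rho$, and its residue field is $\O/\rho$.
\item It is open, since it contains the closure of $c$, namely $\prod_{v} v^{a_v}\O_{\K,v}$.
\end{enumerate}

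\textbf{Inverse map.} Given $A$ as above, set $\O_A:=A\cap\OK$, with $\OK$ embedded diagonally.
\begin{enumerate}
\item $\O_A$ contains the ideal $c:=\prod_{v\in S}v^{a_v}$, which is a nonzero ideal of $\OK$, so $\O_A$ is an order.
\item By density of the diagonal and the CRT, $\sfrac{\O_A}{c}\simeq\sfrac{A}{\widehat c}$ as subrings of $\sfrac{\OK}{c}\simeq\prod_v \sfrac{\O_{\K,v}}{v^{a_v}}$. This is a local ring because $A$ is local.
\item Let $w\in M(\OK)\setminus S$. Then $c\not\subseteq w$, so by \cref{conductor} the localization of $\O_A$ at $w\cap\O_A$ is a DVR.
\item Every prime of $\O_A$ containing $c$ corresponds to a prime of the local ring $\sfrac{\O_A}{c}$, so there is exactly one, call it $\rho$. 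If $\O_A\neq\OK$ then $S(\O_A)=\{\rho\}$ and $\O_A$ is irreducible.
\item Each $v\in S$ contains $c$ and hence lies over $\rho$, so $\overline{\rho}=S$.
\item $c\subseteq c_{\O_A}$, because $c$ is an $\OK$-ideal contained in $\O_A$. By \cref{conductor}, $c_{\O_A}$ is contained in every $v\in\overline\rho$. Therefore $\sqrt{c_{\O_A}}=\prod_{v\in S}v$.
\end{enumerate}

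\textbf{Mutual inverses.}
\begin{enumerate}
\item Start from $\O$. Then $\O\supseteq c_{\O}$, and $\O$ is determined by $\sfrac{\O}{c_{\O}}\subseteq\sfrac{\OK}{c_{\O}}$, as noted in the preceding discussion. Its completion $\widehat\O$ has the same image in $\prod_v\sfrac{\O_{\K,v}}{v^{a_v}}$. So $\widehat\O\cap\OK$ and $\O$ both contain $c_{\O}$ and have the same image mod $c_{\O}$, which gives $\widehat\O\cap\OK=\O$.
\item Start from $A$. Since $A$ is closed and open and contains $\widehat c$, it is determined by $\sfrac{A}{\widehat c}\simeq\sfrac{\O_A}{c}$. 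The same image is produced by $\varprojlim\sfrac{\O_A}{\rho^k}$, so the completion of $\O_A$ recovers $A$.
\end{enumerate}

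\textbf{Main obstacle.} The delicate step is checking that $A\mapsto A\cap\OK$ yields the right support. The risk is that some $v\in S$ fails to lie over $\rho$, or that $\sqrt{c_{\O_A}}$ is strictly smaller than $\prod_{v\in S}v$. My first attack is the argument in the inverse-map list above: it uses locality of $A$ to force a single prime above $c$, and the inclusion $c\subseteq c_{\O_A}\subseteq\bigcap_{v\in\overline\rho}v$ to pin down the radical.

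A secondary issue is that the statement as written must be read with "local sub-ring" meaning open, proper, local subring. Without this, subrings such as $\Z_p$ sitting diagonally in $\prod_{v\in S}\O_{\K,v}$ would break surjectivity. I would state that convention at the start of the proof.
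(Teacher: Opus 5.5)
Your proposal is correct and follows essentially the paper's route: the paper notes that $\O$ is determined by $\O/c_{\O}$ as a subring of $\OK/c_{\O}\simeq\prod_{v\in S}\O_{\K,v}/v^{a_v}\O_{\K,v}$ and reads both maps off there, and you supply the details its one-paragraph discussion omits (comparing the $\rho$-adic and $c$-adic topologies, locality of the completion, the support computation $\overline{\rho}=S$, and both round trips). Your convention that ``local sub-ring'' means an open, proper local subring is a necessary clarification that the paper leaves implicit: otherwise non-open local subrings, such as a diagonal $\Z_p$ (which in general does not meet $\OK$ in an order) or the localization $\O_\rho$ itself (which meets $\OK$ in the same $\O$ as $\varprojlim\O/\rho^k$ does), would break the bijection.
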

The above set up gives us a nice simple way to get the Furtwangler's condition for an ideal to be a conductor ideal.
Consider an ideal $c$ of $\O_\K$ such that $c\cap \Z = (p^{m+1})$ for some prime $p$ in $\Z.$ If $\O$ is an irreducible order with conductor $c,$ then $p^m\Z+c$ cannot be an ideal of $\OK.$ If it were then this would violate the conductor being the largest ideal of $\OK$ sitting in $\O.$ Set $\tau=p^m\OK+c$.
Thus, $c$ cannot satisfy, $p^m\Z+c=p^m\OK+c.$  In other words, we cannot have 
\begin{align*}
    &\forall x\in \OK\:  \exists \:r\in  \Z : (x-r)p^m\subseteq c\\
    &\iff \forall x\in \OK\:  \exists \:r\in  \Z : (x-r)p^m\tau^{-1}\subseteq c \tau^{-1}\\
    &\iff \forall x\in \OK\:  \exists \:r\in  \Z : (x-r)\subseteq c \tau^{-1}\\
    &\iff c\tau^{-1} \textit{ is a prime ideal $v$ with } f_v=e_v=1 
\end{align*} 
Unpacking the final condition as a condition about the prime factorization of $c$ gives us the Furtwangler's condition.
The only if part is trivial and follows by considering the order $\Z+c$ which will be an order with conductor $c$ when $c$ satisfies the Furtwangler's condition: 
\begin{theorem} 
    Suppose the following holds. 
    An ideal $c$ corresponds to a conductor of some order $\O$ in $\OK,$ if and only if one of the following conditions hold.
    \begin{itemize}
        \item For all $v|c,$ we have either $f_{v}\ge 2$ or $v(c)\not\equiv 1\bmod e_{v}$ (Here, $v(c)$ denotes the power of $v$ dividing $c.$).
        \item If $f_{v}=1$ and $v(c)\equiv 1\bmod e_{v}$ for some valuation $v|c,$ then there must exist another valuation $w$ such that
        $\frac{w(c)}{e_{w}}> \frac{v(c)-1}{e_{v}}$ and $w\cap R=v\cap R =\rho.$
    \end{itemize}
\end{theorem}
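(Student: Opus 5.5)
We would reduce the statement to the irreducible case via the Fundamental Theorem of Orders (\cref{FTO}) and Separation Lemma 2 (\cref{SEPARATION LEMMA 2}). Given an ideal $c$ of $\OK$, factor it as $c=\prod_p c_p$, where $c_p$ collects the primes $v\mid c$ lying over $p$. These factors are pairwise coprime.

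\emph{Gluing.} If each $c_p$ is the conductor of some order $\O_p$, then $\O=\cap_p \O_p$ has conductor $c$ by \cref{SEPARATION LEMMA 2}. Conversely, by \cref{FTO} the conductor of any order is a product of pairwise coprime conductors of irreducible orders. More precisely, each irreducible component $\O_\rho\cap\OK$ has conductor supported on $\overline{\rho}$, which is a set of primes over a single rational prime $p$. So the problem splits into local problems: for a fixed $p$, decide which $p$-supported ideals are conductors of orders whose singular primes all lie over $p$.

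\emph{Necessity.} Following the argument sketched before the statement, let $\O$ be irreducible with conductor $c$ and $c\cap\Z=(p^{m+1})$. Set $\tau=p^m\OK+c$. If $p^m\Z+c=\tau$, then $p^m\Z+c$ would be an $\OK$-ideal contained in $\O$ but not in $c$. This contradicts maximality of $c_\O$ (\cref{conductor}). The displayed chain of equivalences turns $p^m\Z+c=\tau$ into the statement that $c\tau^{-1}$ is a single prime $v$ with $e_v=f_v=1$ relative to the relevant residue ring. Writing $v(\tau)=\min(v(c),m e_v)$, we get $c\tau^{-1}=\prod_{w\mid c}w^{\max(0,\,w(c)-me_w)}$. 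This equals a single prime $v$ exactly when $v(c)=me_v+1$ and $w(c)\le me_w$ for all other $w$ above $p$. The first condition says $f_v=1$, $v(c)\equiv 1\bmod e_v$, and $m=(v(c)-1)/e_v$. The second says no $w$ over the same $p$ (equivalently, over the same $\rho$ in the irreducible component) has $w(c)/e_w>(v(c)-1)/e_v$. This failure is exactly the negation of the two bulleted alternatives. Bookkeeping for $f_v$ is needed here: the quotient $\OK/v$ must be $\F_p$ for $\Z$ to surject, so the $f_v\ge 2$ escape appears.

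\emph{Sufficiency.} Take $\O=\Z+c$, or more precisely $\Z+c_p$ locally at each $p$, then glue. Its conductor $c'$ contains $c$. If $c'\supsetneq c$, then some $\OK$-ideal strictly between $c$ and $c+\Z$ lies in $\Z+c$. Any such ideal contains an ideal of the form $c\,v^{-1}$ and is contained in $\Z+c$. Reversing the chain shows this forces $f_v=1$, $v(c)\equiv 1 \bmod e_v$, and domination of all other $w$, which the hypothesis excludes.

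\emph{Main obstacle.} The hardest step is this converse: showing that a minimal overideal $cv^{-1}\subseteq \Z+c$ forces the precise congruence and domination conditions. We would handle it by intersecting with $\Z$. First, $cv^{-1}\cap\Z=p^{m'}\Z$. Second, $cv^{-1}\subseteq p^{m'}\Z+c$ gives $|cv^{-1}/c|=p^{f_v}\le p$. The remaining step is to compare exponents over each $w$ to get the inequality.
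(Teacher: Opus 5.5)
Your proposal is correct and follows essentially the paper's route. You reduce to conductors supported over a single rational prime via the Fundamental Theorem of Orders and Separation Lemma 2, get necessity from $\tau=p^m\OK+c$ and maximality of the conductor, and get sufficiency from the order $\Z+c$. Your minimal over-ideal argument (an $\OK$-ideal $cv^{-1}\subseteq\Z+c$ must equal $p^m\Z+c$, forcing $f_v=1$ and $c\tau^{-1}=v$) fills in the sufficiency step the paper calls trivial. Note that the ``$e_v=1$'' you inherited from the paper's chain is spurious: only $f_v=1$ (i.e.\ $\nm(v)=p$) is needed, and that is all your exponent bookkeeping actually uses.
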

In fact, combining with Fundamental Theorem of Orders or \cref{FTO} it gives a slightly more nuanced answer about orders and associated conductors and what can exist. It tells us that Furtwangler's condition has to hold for irreducible components and not just the order itself.

For example, it tells us that there cannot exist an order $\O$ with conductor $vw$ with $f_v=e_v=f_w=e_w=1$ and $\sfrac{\O}{vw}\simeq (\sfrac{\Z}{p\Z})^2.$ While this one is obvious, the author is not sure all such conditions can be considered obvious.
Furthermore we can talk about orders that are some sub-ring $R$-modules (not necessarily full rank) and even get a Furtwangler's equivalent condition for conductor ideals in $R$-algebras in terms of prime factorizations of prime ideals in $R$ can the powers and inertial and ramification degrees of primes sitting over them.
To be specific, using the exact same argument as above, we get the following generalization, \cite{furtwangler1919fuhrer}, also see \cite{Reinhart2015ANO}.
\begin{theorem}(Classification of conductor ideals of $R$-algebras whose fractional field is some fixed number-field $\K.$)
    For each prime $\rho$ of $R$, suppose 
    \[
    \rho\OK= \prod_{i=1}^r v^{e_{R,v}} \textit{ with } f_{R,v}:=\log(\nm_{\OK}(v)) / \log(\nm_R(\rho)).
    \]
    
    Then, the ideal $c$ (an ideal of $\OK$) is a conductor ideal of an order $\O$ which is an $R$-algebra if and only if one of the following condition holds.
\begin{itemize}
        \item For all $v|c,$ we have either $f_{R,v}\ge 2$ or $v(c)\not\equiv 1\bmod e_{R,v}$ (Here, $v(c)$ denotes the power of $v$ dividing $c.$).
        \item If $f_{R,v}=1$ and $v(c)\equiv 1\bmod e_{R,v}$ for some valuation $v|c,$ then there must exist another valuation $w$ such that
        $\frac{w(c)}{e_{R,w}}> \frac{v(c)-1}{e_{R,v}}$ and $w\cap R=v\cap R =\rho.$
\end{itemize}
\end{theorem}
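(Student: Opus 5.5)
Via the Fundamental Theorem of Orders (\cref{FTO}) and Separation Lemma 2 (\cref{SEPARATION LEMMA 2}) I will reduce to one prime $\rho$ of $R$ and run the Furtw\"angler argument above with $\Z$ replaced by $R$. Factor $c=\prod_\rho c_\rho$, where $c_\rho$ collects the primes $v$ of $\OK$ with $v\cap R=\rho$. These factors are pairwise coprime. I will show two things. First, if $\O$ is an $R$-algebra with conductor $c$, then the irreducible components $\O_\rho\cap\OK$ have conductors exactly the $c_\rho$ (Separation Lemma 1), and each component is again an $R$-algebra, because it is an intersection of $R$-algebras. Conversely, if each $c_\rho$ is the conductor of some $R$-algebra $\O^{(\rho)}$, then $\O=\cap_\rho\O^{(\rho)}$ has conductor $\prod c_\rho=c$ by Separation Lemma 2. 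So $c$ is a conductor of an $R$-algebra iff each $c_\rho$ is, and I may assume $\sqrt{c}$ lies over a single prime $\rho$ of $R$. Localizing at $\rho$ (\cref{conductorlocalization}) I may further treat $R_\rho$ as a DVR with uniformizer $\pi$. I am tacitly taking $R$ to be Dedekind, as the statement implicitly does by writing $\rho\OK=\prod v^{e_{R,v}}$.

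\emph{Sufficiency.} I will show that when the stated condition holds, $c$ is the conductor of the smallest candidate $R+c$. Its conductor $c'$ contains $c$. Suppose $c'\supsetneq c$. Then some $v$ has $c\subsetneq cv^{-1}\subseteq R+c$, and so $cv^{-1}=(cv^{-1}\cap R)+c$. Let $m$ be the largest integer with $c\subseteq\pi^{m}\OK$, i.e.\ $m=\min_w\lfloor w(c)/e_{R,w}\rfloor$ in the localized sense, and put $\tau=\pi^m\OK+c$. Following the chain of equivalences displayed before the previous theorem, with $p$ replaced by $\pi$, I expect to show: $\pi^mR+c$ is an $\OK$-ideal iff $c\tau^{-1}$ is a single prime $v$ with $f_{R,v}=1$, i.e.\ $\OK/c\tau^{-1}=R/\rho$. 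Concretely, $c\tau^{-1}$ is a prime $v$ exactly when $v(c)=m e_{R,v}+1$, so $v(c)\equiv1\bmod e_{R,v}$, and every other $w$ has $w(c)\le m e_{R,w}$, i.e.\ $w(c)/e_{R,w}\le (v(c)-1)/e_{R,v}$. This is exactly the negation of the two bullet conditions. So under the hypothesis no such enlargement occurs.

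\emph{Necessity.} If $\O\supseteq R$ has conductor $c$ but the condition fails, then there is a $v$ with $f_{R,v}=1$, $v(c)=me_{R,v}+1$, and $w(c)\le me_{R,w}$ for every other $w$ over $\rho$. Put $c^*:=cv^{-1}=\pi^m\OK+c$ (this uses the inequalities on the $w$). Since $f_{R,v}=1$, every $x\in\OK$ is congruent mod $v$ to an element of $R$. Multiplying by $\pi^m$ gives $c^*\subseteq \pi^mR+c\subseteq\O$. Then $c^*$ is an $\OK$-ideal inside $\O$ strictly larger than $c$, which contradicts the maximality of the conductor in \cref{conductor}.

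\emph{Main obstacle.} The delicate step is the valuation bookkeeping in the chain of equivalences. One has to verify that $c\tau^{-1}$ being a prime of residue degree one over $R/\rho$ is equivalent to the stated congruence-and-inequality conditions, and that no other $\OK$-ideal strictly between $c$ and $\OK$ could lie in $R+c$. For the latter I would argue that any such ideal contains some $cv^{-1}$, so it suffices to test the ideals $cv^{-1}$ one $v$ at a time, each reducing to the computation above.
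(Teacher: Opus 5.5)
Your overall route is the paper's: reduce by the Fundamental Theorem of Orders, test the smallest candidate $R+c$, and run the chain of equivalences with $\tau$, replacing $p$ by a uniformizer $\pi$ of $R_\rho$. Your necessity argument is correct. There you tacitly take $m=(v(c)-1)/e_{R,v}$, and $cv^{-1}=\pi^m\OK+c\subseteq \pi^mR+c\subseteq\O$ follows from $\pi^m v\subseteq c$ and $R/\rho\cong\OK/v$. One repair is needed in the reduction. The irreducible components are indexed by $S(\O)$, not by primes of $R$, and several of them can lie over the same $\rho$ (for instance two separate gluings of split primes over one $p$). So you must intersect the components over each $\rho$ before claiming the conductor is $c_\rho$.

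The real gap is in sufficiency, and it has two parts.

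\emph{First, your $m$ is the wrong one.} You define it as the largest integer with $c\subseteq \pi^m\OK$, i.e.\ $\min_w\lfloor w(c)/e_{R,w}\rfloor$. That is the inclusion in the wrong direction. The paper's $m$ comes from $c\cap\Z=(p^{m+1})$, which here means $c\cap R_\rho=\pi^{m+1}R_\rho$, i.e.\ $m=\max_w\lceil w(c)/e_{R,w}\rceil-1$. With your $m$ one has $\tau=\pi^m\OK$, and the equivalence you rely on is false. Take $R=\Z$, $p=vw$ split in a quadratic field and $c=v^3$. Then $m=0$, and $\Z+c=\OK$ is an $\OK$-ideal because $\OK/v^3\cong\Z/p^3\Z$, yet $c\tau^{-1}=v^3$ is not prime. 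The equivalence needs $\pi^{m+1}\OK\subseteq c$: then $c\tau^{-1}=(c:\pi^m\OK)\supseteq\pi\OK$, so $\OK/c\tau^{-1}$ is a nonzero quotient of the field $R/\rho$.

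\emph{Second, you never connect ``$cv^{-1}\subseteq R+c$'' to ``$\pi^mR+c$ is an $\OK$-ideal''.} Your phrase ``each reducing to the computation above'' is exactly the missing step. What is needed is the analogue of the cyclicity of $(\Z+c)/c$.
\begin{itemize}
\item With the correct $m$, $(R_\rho+c)/c\cong R_\rho/\pi^{m+1}R_\rho$ is uniserial, with unique simple submodule $(\pi^mR_\rho+c)/c$.
\item $cv^{-1}/c$ is a nonzero submodule killed by $\pi\in v$, so it equals that socle, giving $cv^{-1}=\pi^mR_\rho+c$.
\item Comparing dimensions over $R/\rho$ forces $f_{R,v}=1$.
\item Since $cv^{-1}$ is an $\OK$-ideal containing $\pi^m$, we get $cv^{-1}=\pi^m\OK+c=\tau$.
\item Reading off valuations of $\tau$ gives $v(c)=me_{R,v}+1$ and $w(c)\le me_{R,w}$ for all other $w$ over $\rho$, which is the negation of the two bullets.
\end{itemize}
With these two fixes, the valuation bookkeeping you describe goes through.
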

Another consequence is 
\bibliographystyle{abbrv}
\bibliography{main}
\end{document}